\definecolor{dark-red}{rgb}{0.5,0.15,0.15}
\definecolor{dark-blue}{rgb}{0.15,0.15,0.6}
\definecolor{dark-green}{rgb}{0.15,0.6,0.15}
\DeclareRobustCommand{\circledbullet}{
\tikz{
\pgfsetbaselinepointlater{\pgfpointanchor{X}{base}}
\pgfcircle{\pgfpointorigin}{0.12cm}
\pgfusepath{stroke}
\node (X) {$\bullet$};
}}
\numberwithin{equation}{section}
\DeclareMathOperator{\fib}{fib}
\DeclareMathOperator{\Ho}{Ho}
\DeclareMathOperator{\colim}{colim}
\DeclareMathOperator{\Fun}{Fun}
\DeclareMathOperator{\Sp}{Sp}
\DeclareMathOperator{\cofib}{cofib}
\newcommand{\Z}{\mathbb{Z}}
\newcommand{\KGL}{\mathbf{KGL}}
\newcommand{\KQ}{\mathbf{KQ}}
\newcommand{\KO}{\mathbf{KO}}
\newcommand{\KU}{\mathbf{KU}}
\newcommand{\ku}{\mathbf{ku}}
\newcommand{\ko}{\mathbf{ko}}
\newcommand{\K}{\mathbf{K}}
\newcommand{\R}{\mathbf{R}}
\newcommand{\A}{\mathbf{A}}
\newcommand{\B}{\mathbf{B}}
\DeclareMathOperator{\Cat}{Cat}
\renewcommand{\L}{\mathbf{L}}
\newcommand{\C}{\mathbb{C}}
\renewcommand{\S}{\mathbf{S}}
\newcommand{\xr}{\xrightarrow}
\newcommand{\cal}{\mathcal}
\newcommand{\pics}{\mathfrak{pic}}
\newcommand*{\myfnsymbolsingle}[1]{%
  \ensuremath{%
    \ifcase#1% 0
    \or % 1
      *%
    \or % 2
      \dagger
    \or % 3
      \ddagger
    \or % 4
      \mathsection
    \or % 5
      \mathparagraph
    \else % >= 6
      \@ctrerr
    \fi
  }%
}
\renewcommand*{\backref}[1]{}
\renewcommand*{\backrefalt}[4]{%
  \ifcase #1 %
No citations.% use \relax if you do not want the "No citations" message
  \or
(cit. on p. #2).%
  \else
(cit on pp. #2).%
  \fi%
}
\newalphalph{\myfnsymbolmult}[mult]{\myfnsymbolsingle}{}
\DeclareMathOperator{\Spec}{Spec}
\DeclareMathOperator{\pic}{Pic}
\DeclareMathOperator{\cell}{Cell}
\DeclareMathOperator{\Hom}{Hom}
\DeclareMathOperator{\Mod}{Mod}
\DeclareMathOperator{\End}{End}
\DeclareMathOperator{\Char}{char}
\DeclareMathOperator{\Thick}{Thick}
\DeclareMathOperator{\map}{map}
\DeclareMathOperator{\vcd}{vcd}
\DeclareMathOperator{\Tot}{Tot}
\newcommand{\Pic}{\operatorname{\mathcal{P}ic}}
\newcommand{\iHom}{\underline{\Hom}}
\newtheorem{thm}{Theorem}[section]
\newtheorem{lem}[thm]{Lemma}
\newtheorem{cor}[thm]{Corollary}
\newtheorem{prop}[thm]{Proposition} \theoremstyle{definition}
\newtheorem{rem}[thm]{Remark}
\newtheorem{defn}[thm]{Definition}
\newcommand{\cE}{\cal{C}\mathrm{ell}}
\Crefname{thm}{Theorem}{Theorems}
\Crefname{cor}{Corollary}{Corollaries}
\Crefname{conjecture}{Conjecture}{Conjectures}
\Crefname{rem}{Remark}{Remarks}
\Crefname{prop}{Proposition}{Propositions}
\Crefname{question}{Question}{Questions}
\Crefname{figure}{Figure}{Figures}
\Crefname{condition}{Condition}{Conditions}
\Crefname{lem}{Lemma}{Lemmas}
\newcommand*{\longhookrightarrow}{\ensuremath{\lhook\joinrel\relbar\joinrel\rightarrow}}
\title[The homotopy limit problem and the Picard group of Hermitian $K$-theory]{The homotopy limit problem and the cellular Picard group of Hermitian $K$-theory}
\address{Department of Mathematical Sciences, Norwegian University of Science and Technology, Trondheim}
\email{drew.k.heard@ntnu.no}
\newcommand{\cC}{\mathcal{C}}
\newcommand{\SH}{\mathcal{SH}}
\author{Drew Heard}
\subjclass[2010]{14F42,55P42,19G38}
\begin{document}
\begin{abstract}
We use descent theoretic methods to solve the homotopy limit problem for Hermitian $K$-theory over quasi-compact and quasi-separated base schemes. As another application of these descent theoretic methods, we compute the cellular Picard group of 2-complete Hermitian $K$-theory over $\Spec(\C)$, showing that the only invertible cellular spectra are suspensions of the tensor unit.
\end{abstract}
\maketitle
\setcounter{tocdepth}{1}
\tableofcontents

\section{Introduction}
Let $\KGL$ denote the motivic spectrum representing algebraic $K$-theory, along with its $C_2$-action, and $\KQ$ the motivic spectrum representing Hermitian $K$-theory (both always taken over a fixed based scheme $S$). There is a map $f \colon \KQ \to \KGL$ which is the motivic analog of the map $f' \colon \KO \to \KU$ from real $K$-theory to topological $K$-theory in stable homotopy. The purpose of this paper is to investigate the ways in which $f$ behaves like $f'$. For example, there is an equivalence $\KO \simeq \KU^{hC_2}$, and the homotopy limit problem in motivic homotopy theory asks if there is an equivalence $\KQ \simeq \KGL^{hC_2}$. For a field $F$, let $\vcd_2(F)$ denote the mod-2 cohomological dimension of the absolute Galois group of $F(\sqrt{-1})$. R{\"o}ndigs--Spitzweck--{\O}stv\ae{}r \cite{rso_homlim} have shown that if $F$ is a field of $\Char(F) \ne 2$, and $\vcd_2(F)<\infty$, then the homotopy limit problem holds after $\eta$-completion over $S = \Spec(F)$, that is, there is an equivalence $\KQ^\wedge_{\eta} \simeq \KGL^{hC_2}$.

The map $f'$ is a faithful Galois extension \cite{rognes_galois}, and so there is a good theory of descent; there is an equivalence of $\infty$-categories $\Mod(\KO) \simeq \Mod(\KU)^{hC_2}$ \cite[Thm.~3.3.1]{mathew_stojanoska}. Depending on the choice of base scheme, the map $f$ may be a motivic Galois extension in the sense of \cite{mot_galois}, but even then it is only faithful when restricted to the full subcategory $\Mod(\KQ)^\wedge_{\eta}$ of $\eta$-complete $\KQ$-modules, see \Cref{defn:etacmpl}. Because of this, one cannot expect an equivalence of module categories as above.  Instead, we prove the following.

\newtheorem*{thm:symmonequiv}{\Cref{thm:symmonequiv}}
\begin{thm:symmonequiv}
   Let $S$ be a quasi-compact and quasi-separated base scheme, then there is a symmetric monoidal equivalence of $\infty$-categories
  \[
  \xymatrix{
\Mod(\KQ)_{\eta-\mathrm{cmpl}} \ar@<0.5ex>[r]^-{\sim} & \ar@<0.5ex>[l] \Mod(\KGL)^{hC_2}.
}
  \]
%If $\SH(S)$ is not generated by dualizable objects, then there is a symmetric monoidal equivalence of $\infty$-categories
 % \[
%\Mod_{\cE}(\KQ)_{\eta-\mathrm{cmpl}} \simeq \Mod_{\cE}(\KGL)^{hC_2}.
 % \]
\end{thm:symmonequiv}
Here the left adjoint is given by extension of scalars, and the right adjoint is given by taking homotopy fixed points. As a corollary, we obtain the following solution of the homotopy limit problem for Hermitian $K$-theory, strengthening \cite[Thm.~1.2]{rso_homlim}. Here $\SH(S)$ denotes the stable motivic category over the base scheme $S$.

\newtheorem*{cor:monequivalence}{\Cref{cor:monequivalence}}
\begin{cor:monequivalence}
 There is an equivalence $\KQ^{\wedge}_{\eta} \simeq \KGL^{hC_2}$ in $\SH(S)$.
\end{cor:monequivalence}

The equivalence $\Mod(\KO) \simeq \Mod(\KU)^{hC_2}$ has been used by Mathew--Stojanoska and Gepner--Lawson to give a proof that the Picard group of invertible modules $\pic(\KO) \cong \Z/8$, generated by the suspension $\Sigma^1 \KO$. The situation becomes slightly more complicated in the motivic world. Existing techniques appear to be best suited for computing the invertible objects in the cellular subcategory, cf.~\Cref{lem:picalg} (note that classically every object in $\Mod(\R)$, for $\R$ a commutative ring spectrum, is cellular). Even then, in order to aid computability, we need to work not with $\Mod(\KQ)$, but with $\Mod(\KQ^\wedge_2)$, the 2-completion, and over $\Spec \C$, where it is known that $\KQ^\wedge_2 \simeq (\KGL^\wedge_2)^{hC_2}$. Here it is possible to completely describe $\pi_{\ast,\ast}(\KQ^\wedge_2)$ thanks to the computations of Isaksen and Shkembi \cite{isa_shk}. The similarity with the classical computation of $\pi_*(\KO)$ leads one to wonder if the Picard group $\pic_{\cE}(\KQ^\wedge_2)$ of invertible cellular $\KQ^\wedge_2$-modules is also just given by suspensions of the unit. Since $\KQ^\wedge_2$ is an $(8,4)$-periodic spectrum, this would give a Picard group of $\Z \oplus \Z/4$, and this turns out to be the case.

\newtheorem*{thm:piccalc}{\Cref{thm:piccalc}}
\begin{thm:piccalc}
Over $\Spec(\C)$ the cellular Picard group $\pic_{\cE}(\KQ^\wedge_2) \cong \Z \oplus \Z/4$.
\end{thm:piccalc}
The proof uses the methods introduced in \cite{mathew_stojanoska} and \cite{gepner_lawson}. Namely, for any symmetric monoidal $\infty$-category $\cC$, there exists a connective spectrum $\pics(\cC)$ with the property that $\pi_0(\pics(\cC)) \cong \pic(\cC)$, the Picard group of invertible objects in $\cC$. Using this, we construct a homotopy fixed point spectral sequence
\[
H^s(C_2,\pi_t\pics_{\cE}(\KGL^\wedge_2)) \implies \pi_{t-s}\pics_{\cE}(\KGL^\wedge_2)^{hC_2},
\]
whose abutment for $t-s=0$ has $\pic_{\cE}(\KQ^\wedge_2)$ as a subgroup. Using the observation that this Picard group must be at least $\Z \oplus \Z/4$, we use this spectral sequence to show that this is actually the whole Picard group.
\begin{rem}
    We consider the category $\Mod_{\cE}(\KQ^\wedge_2)$ with the tensor product $- \otimes_{\KQ^\wedge_2}-$. There are other possibilities; for example, one could take the 2-complete tensor product. In stable homotopy, there is an equivalence between $\pic(\KO^\wedge_2)$ and $\pic_{K(1)}(\KO^\wedge_2)$, the latter being the Picard group of $K(1)$-locally invertible $\KO^\wedge_2$-modules, where $K(1)$ is the first Morava $K$-theory. We are unable to even compute the 2-complete Picard group of $\KGL^\wedge_2$ in this case, however.
\end{rem}
\subsection*{Conventions}
We freely use the language of stable $\infty$-categories, as described by Lurie in \cite{ha}. Homotopy limits and colimits in an $\infty$-category will be denoted by $\varprojlim$ and $\varinjlim$ respectively. We use the notation $\iHom$ to denote the internal Hom in a closed symmetric monoidal $\infty$-category.

All schemes are assumed to be quasi-compact and quasi-separated.
\subsection*{Acknowledgments}
We thank Tobias Barthel, Marc Hoyois, Akhil Mathew, Niko Naumann, Paul-Arne {\O}stv\ae{}r, Birgit Richter, Markus Spitzweck, and Gabriel Valenzuela for helpful conversations regarding this work. We are particularly grateful to Markus Land for detailed comments on an earlier version of this document, and to the referee for helpful comments, including the extension to arbitrary base schemes. We thanks Hamburg Universit{\"a}t for its hospitality, and DFG Schwerpunktprogramm SPP 1786 for financial support. Finally, revisions of this work were done while the author was supported by a grant from the Trond Mohn foundation.

\section{Background}
In this section we provide the necessary background material on motivic homotopy, in particular the motivic spectra $\KGL$ and $\KQ$ representing algebraic $K$-theory and Hermitian $K$-theory respectively. We also introduce the category of cellular modules, as first studied by Dugger and Isaksen \cite{dugger_isaksen_cell}.
\subsection{Motivic homotopy theory}\label{sec:backmotivic}
Let $S$ be a base scheme. We will work in the Morel--Voevodsky motivic stable homotopy category $\SH(S)$ of $\mathbb{P}^1$-spectra over $S$ \cite{morel_voev}. In fact, Morel and Voevodsky work with Noetherian schemes of finite Krull dimension, however using \cite[App.~C]{Hoyois2014quadratic} $\SH(S)$ can be defined over an arbitrary base scheme. For a more detailed introduction to this category, we refer the reader to, for example \cite{voe_intro} or \cite{lev_intro}.  We  do, however, recall here that we have two notions of sphere in $\SH(S)$: the suspension spectrum of the simplicial sphere $S^1 \coloneqq \Delta^1/\partial \Delta^1$, and the suspension spectrum of the Tate sphere $\mathbb{G}_m \coloneqq \mathbb{A}^1 \setminus \{ 0 \}$, pointed by 1. As is customary we set $S^{a+b,b} \coloneqq S^a \otimes \mathbb{G}_m^{\otimes b}$. Under these conventions we have $\mathbb{P}^1 \simeq S^{2,1}$. Note that the stable motivic homotopy category is triangulated, with suspension $S^{1,0}$.

 The work of Morel--Voevodsky uses the techniques of model categories, however it is known that to each model category there exists an underlying $\infty$-category. To be very specific, Robalo \cite[Cor.~1.2]{robali_motivic} has shown that the $\infty$-category underlying $\SH(S)$, which we will also denote by $\SH(S)$, is a stable, presentably symmetric monoidal $\infty$-category.\footnote{ That is, a symmetric monoidal $\infty$-category which is presentable and whose associated tensor bifunctor preserves colimits separately in each variable.} It follows that for any commutative algebra object $\A$ in $\SH(S)$ - that is, a commutative motivic ring spectrum, there is a well-defined presentably symmetric monoidal stable $\infty$-category $\Mod(\A)$ of $\A$-modules \cite[Sec.~4.5]{ha}.

We recall also the non-nilpotent stable motivic Hopf map $\eta \colon S^{1,1} \to S^{0,0}$, induced by the projection $\mathbb{A}^2 \setminus \{0 \} \to \mathbb{P}^1$. Let $C(\eta)$ denote the cofiber of this map, and $C(\eta^k)$ for the cofiber of the iterations of $\eta$.
\begin{defn}\label{defn:etacmpl}
  The $\eta$-completion $M^\wedge_{\eta}$ of a motivic spectrum $M$ is the Bousfield localization of $M$ at $C(\eta)$, or equivalently, the limit $M^{\wedge}_{\eta} \simeq \varprojlim (M \otimes C(\eta^k))$.
\end{defn}

\begin{comment}
 As a stable category, the category of motivic spectra is enriched over the category of spaces (or spectra): given two motivic spectra $\A$ and $\B$, there is an infinite loop space $\map(\A,\B)$ defined by
 \[
\map(\A,\B) = \varprojlim \map(\Omega^\infty \Sigma^k\A, \Omega^\infty \Sigma^k\B),
 \]
such that $\Omega^\infty \Hom_{\SH(S)}(\A,\B) \simeq \map(\A,\B)$.
\end{comment}
The classical stable homotopy category is compactly generated by the sphere spectrum, but the situation in motivic homotopy theory is much more complicated. In fact, $\SH(S)$ is compactly generated, but there are infinitely many generators - a description of the generators can be found in, for example, \cite[Thm.~9.2]{dugger_isaksen_cell}, but this is not important for our work. We simply note that the tensor unit $\S = \Sigma^\infty S_+$ is always compact, see \cite[Prop.~C.12]{Hoyois2014quadratic}, which implies that all the dualizable objects in $\SH(S)$ are compact . One can ask when $\SH(S)$ is compactly generated by dualizable objects, but the problem seems to be subtle. It is true when $S = \Spec(k)$ for a field $k$ admitting resolutions of singularities (for example, if $\Char(k) = 0$), but need not be true in general, see \cite[Rem.~8.2]{nso_landweber}.

For a motivic commutative ring spectrum $\A$ the unit map $\S \to \A$ gives rise to a standard adjunction \[\xymatrix{\SH(S) \ar@<0.5ex>[r]& \ar@<0.5ex>[l] \Mod(\A)}\] given by extension and restriction of scalars. By \cite[Corollary 4.2.3.5]{ha} restriction of scalars preserves colimits, and so its left adjoint preserves compact objects. In particular, the tensor unit $\A$ is compact in $\Mod(\A)$, so that all dualizable objects in $\Mod(\A)$ are compact.

We will predominately be interested in two motivic spectra, which we now introduce.
\subsection*{Algebraic and hermitian $K$-theory}
The motivic analog of topological $K$-theory $\KU$ is the motivic algebraic $K$-theory spectrum $\KGL$. In fact, the construction is similar to that of $\KU$ in topology; here the role of the unitary groups is played by the group schemes $\mathrm{GL}_n$ and their classifying spaces $\mathrm{BGL}_n$. Indeed, if we define $\mathrm{BGL} = \colim_n \mathrm{BGL}_n$, then $\KGL$ is the $\mathbb{P}^1$-spectrum defined by the constituent spaces
\[
\KGL = (\Z \times \mathrm{BGL},\Z \times \mathrm{BGL},\ldots),
\]
where the structure maps are defined in \cite[p.~600]{MR1648048}. Each of the group schemes $\mathrm{GL}_n$ have an involution given by inverse-transpose, which leads to a $C_2$-action on $\mathrm{GL}$, and in turn on $\mathrm{BGL}$. This gives $\KGL$ itself a natural $C_2$-action. The spectrum $\KGL$ is $(2,1)$-periodic; there is a Bott periodicity map
\[
\beta \colon \Sigma^{2,1} \KGL \xr{\sim} \KGL.
\]
Morel and Voevodsky \cite{morel_voev} (over regular Noetherian schemes) and Cisinski \cite[Thm.~2.20]{Cisinski2013Descente} (over a Noetherian scheme of finite Krull dimension, however the proof holds more generally over arbitrary base schemes, see also \cite[Sec.~5]{hoyois_cdh})  have shown that $\KGL$ represents Weibel's homotopy invariant algebraic $K$-theory\footnote{If the scheme $S$ is regular, then this agrees with Quillen's algebraic $K$-theory.} in the sense that for any smooth scheme $X$ we have
\begin{equation}\label{eq:rep}
\pi_0\Hom_{\SH(S)}(S^{p,q} \otimes \Sigma^{\infty} X_+, \KGL) = \KGL^{p,q}(X) \simeq \K_{p-2q}(X).
\end{equation}
In particular for $X = S$, we have $\pi_{p,q}\KGL \cong \K_{p-2q}(S)$.

It is known that $\KGL$ is a motivic commutative ring spectrum. For example, this follows from the identification of $\KGL \simeq \Sigma^{\infty}_+ \mathbb{P}^\infty[1/\beta]$, see \cite{so_bott,gs_bott}. In fact, the commutative ring structure is even unique \cite{nso_uniqueness}. As noted previously, the existence implies that there is a stable symmetric monoidal $\infty$-category $\Mod(\KGL)$ of modules over $\KGL$. Moreover, $\KGL$ itself is compact as a $\KGL$-module, so that all dualizable objects are compact. We note that the $\infty$-category $\Mod(\KGL)$ acquires a $C_2$ action (given by a map $BC_2 \to \Cat_{\infty}$ into the $\infty$-category of small $\infty$-categories \cite[Ch.~3]{lurie_htt}), arising from the $C_2$-action on $\KGL$ mentioned previously. Informally, for $g \in C_2$ this action sends a $\KGL$-module $M$ with structure map $\KGL \otimes M \xr{f} M$ to the motivic spectrum $g \cdot M$ which is $M$ as a motivic spectrum and with structure map $\KGL \otimes M \xr{g \otimes \text{id}} \KGL \otimes M \xr{f} M$.
\begin{rem}
  To make this slightly more precise, we note that the sending a commutative algebra to its category of modules is contravariantly functorial in the commutative algebra, and has a left adjoint given by extension of scalars \cite[Corollary 4.2.3.2, Corollary 4.2.3.7, and Remark 4.2.3.8]{ha}. In particular, the $C_2$-action on $\KGL$ descends to a $C_2$-action on the module category $\Mod(\KGL)$.
\end{rem}

With the representability of algebraic $K$-theory in the stable motivic homotopy category, one was led to wonder if Karoubi's Hermitian $K$-theory $\K^h$ was also representable. This was proved by Hornbostel in \cite{Hornbostel}, at least over Noetherian regular base schemes with $1/2 \in S$; there is a motivic spectrum $\KQ$ with the property that for a regular scheme $X$ we have
\begin{equation}\label{eq:KHrep}
\K^h_p(X) \cong \pi_0\Hom_{\SH(S)}(S^{p,0} \otimes \Sigma^\infty X_+,\KQ),
\end{equation}
where the left-hand side denotes the $p$-th Hermitian $K$-group of $X$. We refer the reader to either Hornbostel's original paper \cite{Hornbostel}, or to \cite[Sec.~4]{ro_slices} for further details, as well as the identification of \eqref{eq:KHrep} for all $S^{p,q}$.  In \cite[Sec.~4]{ro_slices} one can also find a construction of the forgetful map $f \colon \KQ \to \KGL$ used repeatedly in this paper. The ring spectrum $\KQ$ is also a motivic commutative ring spectrum, and the map $f$ is a map of commutative algebras, see \cite[Sec.~3.2.5]{bachmann_hopkins} (where $\KQ$ is denoted $KO$). Finally, we note that the work of Bachmann and Hopkins \cite[Sec.~3.2]{bachmann_hopkins} gives constructions of $\KGL$ and $\KQ$ over arbitrary base schemes, and shows moreover that these motivic spectra are stable under base change \cite[3.2.9]{bachmann_hopkins}.

\subsection{Cellularity}
As noted, one of the pleasant properties of the ordinary stable homotopy category is that, given a commutative ring spectrum $\R$, the $\infty$-category $\Mod(\R)$ of $\R$-modules is compactly generated by $\R$ itself. This is very far from true in the motivic homotopy category; for example, as already noted $\SH(S)$ has infinitely many compact generators. Instead, following \cite{dugger_isaksen_cell}, we can consider the subcategory of cellular objects, in the sense of the following definition.
\begin{defn}\label{def:cell}
Let $\A$ be a commutative motivic ring spectrum in $\SH(S)$. The full subcategory of cellular objects $\Mod_{\cE}(\A)$ is defined to be the localizing subcategory of $\Mod(\A)$ generated by the tensor unit $\A$ and its suspensions $\Sigma^{p,q}\A$. That is, $\Mod_{\cE}(\A)$ is the smallest thick subcategory of $\Mod(\A)$ containing $\A$ and its bigraded (de)suspensions that is closed under arbitrary colimits.
\end{defn}

If $M$ and $N$ are in $\Mod_{\cE}(\A)$, then so is $M \otimes_{\A} N$, since $-\otimes_{\A}-$ commutes with colimits. This implies that $\Mod_{\cE}(\A)$ is a symmetric monoidal $\infty$-category, see \cite[Rem.~2.2.1.2]{ha}. It is also presentable by \cite[Cor.~1.4.4.2]{ha}

We let $\Mod^{\omega}_{\cE}(\A)$ denote the compact objects in $\Mod_{\cE}(\A)$. These have a convenient description, which can be proved in the same way as \cite[Lem.~4.3]{nso_landweber}.
\begin{lem}
 The subcategory of compact objects $\Mod^\omega_{\cE}(\A)$ is equivalent to the thick subcategory of $\Mod(\A)$ generated by the tensor unit $\A$ and its suspensions $\Sigma^{p,q}A$.
\end{lem}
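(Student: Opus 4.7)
The plan is to reduce the statement to the standard fact that a presentable stable $\infty$-category generated under colimits by a set of compact objects has compact objects equal to the thick subcategory generated by those generators (Neeman's theorem, as in \cite[Prop.~1.4.4.1--2]{ha}). Concretely, I would first identify $\Mod_{\cE}(\A)$ as a compactly generated presentable stable $\infty$-category whose compact generators are exactly the bigraded suspensions $\Sigma^{p,q}\A$.

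First, I would verify that each $\Sigma^{p,q}\A$ is compact in $\Mod_{\cE}(\A)$. The text already notes that $\A$ is compact in $\Mod(\A)$, and compactness is preserved under the suspension auto-equivalence $\Sigma^{p,q}$. Since the inclusion $\Mod_{\cE}(\A) \hookrightarrow \Mod(\A)$ preserves all colimits by definition (the cellular subcategory is closed under colimits in $\Mod(\A)$), any object that is compact in the larger category and lies in the smaller one is compact in the smaller one. Hence the $\Sigma^{p,q}\A$ are compact objects of $\Mod_{\cE}(\A)$, and by construction they generate $\Mod_{\cE}(\A)$ as a localizing subcategory.

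One inclusion of the claimed equivalence is then essentially formal: compact objects form a thick subcategory, so the thick subcategory $\Thick \subseteq \Mod(\A)$ generated by the $\Sigma^{p,q}\A$ consists of objects that are compact in $\Mod(\A)$, and as these objects manifestly lie in $\Mod_{\cE}(\A)$ they are also compact there. For the reverse inclusion, I would invoke the cited general principle: in a presentable stable $\infty$-category compactly generated by a set $G$ of compact objects, every compact object belongs to the thick subcategory generated by $G$. Applied to $\Mod_{\cE}(\A)$ with $G = \{\Sigma^{p,q}\A\}$, this immediately yields $\Mod^\omega_{\cE}(\A) \subseteq \Thick$.

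I do not anticipate any serious obstacle; the whole argument is a verification that the abstract machinery of compact generation applies, and the only point worth pinning down carefully is that compactness of the generators transfers along the colimit-preserving fully faithful inclusion $\Mod_{\cE}(\A) \hookrightarrow \Mod(\A)$. The proof of \cite[Lem.~4.3]{nso_landweber} referenced in the paper proceeds in exactly this way, and the only adaptation needed here is to replace the single generator $\S$ by the set $\{\Sigma^{p,q}\A\}_{p,q}$, which changes nothing in the argument.
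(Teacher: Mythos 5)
Your proof is correct and follows essentially the same route the paper intends: the paper's only stated justification is that the lemma "can be proved in the same way as" \cite[Lem.~4.3]{nso_landweber}, which is precisely the compact-generation argument you spell out (the generators $\Sigma^{p,q}\A$ are compact because the colimit-preserving fully faithful inclusion $\Mod_{\cE}(\A) \hookrightarrow \Mod(\A)$ transfers compactness, and then Neeman's theorem identifies $\Mod^{\omega}_{\cE}(\A)$ with the thick subcategory they generate). No gaps.
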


We observe that the inclusion $\Mod_{\cE}(\A) \stackrel{\iota}{\longhookrightarrow} \Mod(\A)$ preserves colimits by construction, and hence has a right adjoint, which we denote $\cell$  \cite[Cor.~5.5.2.9]{lurie_htt}. Moreover, since the inclusion functor preserves compact objects, $\cell$ preserves coproducts, and hence, by \cite[Prop.~1.4.4.1]{ha}, all colimits. As the right adjoint of a symmetric monoidal functor, $\cell$ is lax symmetric monoidal \cite[Cor.~7.3.2.7]{ha}.

We note the following, which also appears in \cite[Rem.~5.19]{bhv_2}.
\begin{lem}\label{lem:cellular}
If $M \in \Mod(\A)$, and $N \in \Mod_{\cE}(\A)$, then there is an equivalence
\[
\cell(M \otimes_{\A} N) \simeq \cell(M) \otimes_{\A} N.
\]
\end{lem}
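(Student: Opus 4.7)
The plan is to construct a natural comparison map from $\cell(M) \otimes_{\A} N$ to $\cell(M \otimes_{\A} N)$, and then verify it is an equivalence by a generator-and-colimit argument, fixing $M$ and viewing both sides as functors of $N \in \Mod_{\cE}(\A)$.

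First I would build the map. The counit $\varepsilon \colon \iota\,\cell(M) \to M$ of the adjunction $\iota \dashv \cell$ tensored with $N$ gives a map $\cell(M) \otimes_{\A} N \to M \otimes_{\A} N$ in $\Mod(\A)$. Since $\Mod_{\cE}(\A)$ is closed under $\otimes_{\A}$ (as noted immediately after \Cref{def:cell}) and $\cell(M), N \in \Mod_{\cE}(\A)$, the source lies in $\Mod_{\cE}(\A)$. Applying the $\iota \dashv \cell$ adjunction (equivalently, using that $\cell$ restricts to the identity on cellular objects) produces a natural transformation
\[
\varphi_{M,N} \colon \cell(M) \otimes_{\A} N \longrightarrow \cell(M \otimes_{\A} N),
\]
natural in both variables.

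Next, fix $M \in \Mod(\A)$ and consider the two functors
\[
F, G \colon \Mod_{\cE}(\A) \longrightarrow \Mod_{\cE}(\A), \quad F(N) = \cell(M) \otimes_{\A} N, \quad G(N) = \cell(M \otimes_{\A} N).
\]
Both preserve arbitrary colimits: $F$ because $-\otimes_{\A} -$ commutes with colimits in each variable, and $G$ because $M \otimes_{\A} -$ commutes with colimits and $\cell$ preserves colimits (as recalled in the paragraph preceding the lemma, using that $\iota$ preserves compact objects so its right adjoint preserves filtered colimits, combined with \cite[Prop.~1.4.4.1]{ha}). Moreover, on the unit we have $F(\A) = \cell(M) \otimes_{\A} \A \simeq \cell(M)$ and $G(\A) = \cell(M \otimes_{\A} \A) \simeq \cell(M)$, and under these identifications $\varphi_{M,\A}$ is an equivalence; by naturality under $\Sigma^{p,q}$, the same holds on each generator $\Sigma^{p,q}\A$.

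Finally, let $\mathcal{C} \subseteq \Mod_{\cE}(\A)$ be the full subcategory of objects $N$ for which $\varphi_{M,N}$ is an equivalence. Because $F$ and $G$ both preserve all colimits and equivalences in $\Mod_{\cE}(\A)$ are detected objectwise, $\mathcal{C}$ is closed under arbitrary colimits. By the previous step $\mathcal{C}$ contains all $\Sigma^{p,q}\A$, and by \Cref{def:cell} these generate $\Mod_{\cE}(\A)$ under colimits, so $\mathcal{C} = \Mod_{\cE}(\A)$ and $\varphi_{M,N}$ is an equivalence for all $N$. The only subtlety to verify carefully is the construction of $\varphi$ as a genuine natural transformation of $\infty$-functors (rather than merely a family of maps), which I would formalize via the lax symmetric monoidal structure on $\cell$ together with the unit of the $\iota \dashv \cell$ adjunction; this is the step most prone to sloppiness, but is otherwise routine.
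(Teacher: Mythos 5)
Your proposal is correct and follows essentially the same route as the paper: both construct the natural comparison map (the paper packages it via the lax symmetric monoidal structure on $\cell$, which is exactly the counit-plus-adjunction construction you describe) and then conclude by observing that the full subcategory of $N$ on which it is an equivalence is localizing, contains the generators $\Sigma^{p,q}\A$, and hence is all of $\Mod_{\cE}(\A)$. Your write-up is somewhat more explicit about checking the generators and about the colimit-preservation of both sides, but there is no substantive difference.
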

\begin{proof}
  As noted, $\cell$ is lax symmetric monoidal, so there is a map
  \[
\cell(M \otimes_{\A} X) \to \cell(M) \otimes_{\A} \cell(X).
  \]
The collection of $X$ for which this is an equivalence is a localizing subcategory (since $\cell$ commutes with colimits) containing $\A$ (and its bigraded suspensions), and hence it is an equivalence for all cellular $X$.
\end{proof}
On the other hand, there is no reason for the internal Hom in $\Mod(\A)$, which we denote by $\iHom_{\A}(M,N)$, to be cellular. Rather, we have the following result.
\begin{lem}\label{lem:cellinthom}
\sloppy
The category of cellular $\A$-modules has an internal Hom given by $\cell (\iHom_{\A}(M,N))$.
  \end{lem}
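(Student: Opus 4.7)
The plan is to verify the defining adjunction for the proposed internal Hom directly by chasing through the adjunctions already at hand. Recall that the inclusion $\iota \colon \Mod_{\cE}(\A) \hookrightarrow \Mod(\A)$ is fully faithful and symmetric monoidal, with right adjoint $\cell$; and that $\iHom_{\A}(M,-)$ is by definition right adjoint to $- \otimes_{\A} M$ on $\Mod(\A)$. The goal is therefore to exhibit, for any $L, M, N \in \Mod_{\cE}(\A)$, a natural equivalence
\[
\Map_{\Mod_{\cE}(\A)}(L \otimes_{\A} M, N) \simeq \Map_{\Mod_{\cE}(\A)}(L, \cell(\iHom_{\A}(M,N))).
\]

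First I would rewrite the right-hand side using the adjunction $\iota \dashv \cell$ to get $\Map_{\Mod(\A)}(\iota L, \iHom_{\A}(\iota M, \iota N))$, and then apply the $\otimes_{\A} \dashv \iHom_{\A}$ adjunction in $\Mod(\A)$ to obtain $\Map_{\Mod(\A)}(\iota L \otimes_{\A} \iota M, \iota N)$. Symmetric monoidality of the inclusion $\iota$ identifies $\iota L \otimes_{\A} \iota M \simeq \iota(L \otimes_{\A} M)$, where the tensor on the right is the (restricted) symmetric monoidal product on $\Mod_{\cE}(\A)$ discussed just before the lemma. Finally, full faithfulness of $\iota$ gives
\[
\Map_{\Mod(\A)}(\iota(L \otimes_{\A} M), \iota N) \simeq \Map_{\Mod_{\cE}(\A)}(L \otimes_{\A} M, N),
\]
and stringing these equivalences together yields the required natural isomorphism.

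There is no hard step here; the argument is a formal adjunction chase. The only point that requires any care is making sure the monoidal structure on $\Mod_{\cE}(\A)$ is really the restriction of the one on $\Mod(\A)$ — but this is exactly what is recorded earlier in the section (the subcategory is closed under $\otimes_{\A}$, so $\iota$ is strong symmetric monoidal), so nothing new is needed. One could equivalently present the proof by noting that the functor $\iota(- \otimes_{\A} M) \colon \Mod_{\cE}(\A) \to \Mod(\A)$ preserves colimits, hence admits a right adjoint, and then identifying that right adjoint as $\cell \circ \iHom_{\A}(M,-)$ via the same two-step adjunction composition. Either formulation establishes the lemma.
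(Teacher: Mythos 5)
Your argument is exactly the paper's: the same three-step adjunction chase (full faithfulness of the inclusion together with compatibility of the tensor products, the $\otimes_{\A} \dashv \iHom_{\A}$ adjunction in $\Mod(\A)$, and the $\iota \dashv \cell$ adjunction), just written in the reverse order and with a little more commentary. The proposal is correct and matches the paper's proof.
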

\begin{proof}
We have the following natural chain of equivalences
\[
\begin{split}
\Hom_{\Mod_{\cE}(\A)}(M \otimes_{\A} S,T) &\simeq \Hom_{\Mod(\A)}(M \otimes_{\A} S,T) \\
& \simeq \Hom_{\Mod(\A)}(M, \iHom_{\A}(S,T)) \\
& \simeq \Hom_{\Mod_{\cE}(\A)}(M,\cell \iHom_{\A}(S,T))
\end{split}
\]
for $M,S,T \in \Mod_{\cE}(A)$.
\end{proof}

\section{Some descent theory}
A faithful $G$-Galois extension of ring spectra $f \colon \A \to \B$ gives rise to an equivalence of symmetric monoidal $\infty$-categories $\Mod(A) \simeq \Mod(B)^{hG}$ \cite[Thm.~3.3.1]{mathew_stojanoska}. The purpose of this section is to study morphisms of motivic ring spectra that are not quite motivic Galois extensions, but for which there is still a variant of descent that holds.
\subsection{Descent theory}
Once again, we assume that $S$ is a Noetherian scheme of finite Krull dimension. Suppose we are given a morphism $f \colon \A \to \B$ of motivic commutative ring spectra $\SH(S)$, making $\B$ a commutative $\A$-algebra, where $\B$ has an action of a finite group $G$ acting via commutative $\A$-algebra maps. The example to keep in mind is, of course, $f \colon \KQ \to \KGL$, however we work in more generality for now in the hope that there may be further applications in the future, such as to a (hypothetical) motivic version of topological modular forms.

\begin{defn}\label{defn:unramified}
The morphism $f \colon \A \to \B$ is called unramified if the the map $\B \otimes_{\A} \B \to \prod_{g \in G} \B$, given informally by $(b_1,b_2) \mapsto \{ b_1g(b_2)\}_{g \in G}$, is an equivalence.
\end{defn}

We note that this is half of the condition that $f$ is a motivic Galois extension in the sense of \cite{mot_galois}, however we do not immediately ask that the natural map $\A \to \B^{hG}$ is a weak equivalence. We also do not require $\B$ to be a faithful $\A$-module, which in turn implies that we do not have a good notion of Galois descent; there need not be an equivalence $\Mod(\A) \simeq \Mod(\B)^{hG}$. Nonetheless, we shall see that there is a weaker notion of descent that does still hold whenever $\B$ is a dualizable $\A$-module.

The following definition is well-known and used, as is the connection with local homology and local cohomology; for example, see \cite[Thm.~3.3.5]{hps}. In the context of $\infty$-categories complete objects have been studied in \cite{bhv1} and \cite{mnn_nilpotence}.
\begin{defn}\label{defn:complete}
  We say that $M \in \Mod(\A)$ is $\B$-complete if, for any $N \in \Mod(\A)$ with $N \otimes_\A \B \simeq 0$, we have that $\Hom_{\A}(N,M)$ is contractible.
\end{defn}
\sloppy This defines a full subcategory $\Mod(\A)_{\B-\mathrm{cmpl}}$ of $\B$-complete $\A$-modules. The inclusion $\Mod(\A)_{\B-\mathrm{cmpl}} \subset \Mod(\A)$ has a left adjoint which is the Bousfield localization $\L^\A_\B$ at $\B$ (in the category of $\A$-modules), see \cite[p.1008]{mnn_nilpotence} for example. This implies that $\Mod(\A)_{\B-\mathrm{cmpl}}$ is a symmetric monoidal $\infty$-category \cite[Prop.~2.2.1.9]{ha}.

In order to state the next reult, we recall the basic theory of homotopy fixed points of a category. Thus, suppose that $\cC$ is a presentably stable symmetric monoidal category $\cC$ with an action of a finite group $G$. This corresponds to an element of the functor category $\Fun(BG,\Cat_{\infty})$, where $BG$ is considered as an $\infty$-groupoid. By definition, the homotopy fixed points $\cC^{hG}$ is the limit of the diagram $\lim_{BG}\cC$. In the usual way, we can identify $BG$ with the colimit of the diagram $S_{\bullet} \colon N(\Delta)^{\text{op}} \to \cal{S}$ (where $N(\Delta)$ denotes the nerve of the simplex category and $\cal{S}$ is the category of spaces) with $S_q \simeq G^q$. It follows that $\cC^{hG}$ can be identified with the limit of the cosimplicial object $C^{\bullet}(G,\cC)$ with $C^q(G,\cC) \simeq \prod_{G^q} \cC$ and with coface and codegeneracies are induced by those on $BG$ (this is simply the Bousfield--Kan formula for a homotopy limit).  See also the discussion after Remark 4.13 of \cite{DAGXI}. Finally, we note that by \cite[Lemma 6.5.3.7]{lurie_htt} it suffices to take this limit over the semi-cosimplicial diagram without the codegeneracy maps; very explicitly, $\cC^{hG}$ can therefore be computed as the totalization
\[
\cC^{hG} \simeq \Tot\left(\xymatrix{\cC \ar@<0.5ex>[r] \ar@<-0.5ex>[r]& \displaystyle \prod_G \cC \ar@<1ex>[r] \ar@<0.3333ex>[r]\ar@<-0.3333ex>[r]\ar@<-1ex>[r] & \displaystyle \prod_{G \times G} \cC \cdots} \right)
\]

With this in mind, we have the following.
\begin{thm}\label{thm:descent}
Suppose $S$ is a Noetherian scheme of finite Krull dimension. Let $f \colon \A \to \B$ be an unramified morphism in $\SH(S)$ with $\B$ a dualizable $\A$-module. Then, there is a symmetric monoidal equivalence of $\infty$-categories
  \[
\Mod(\A)_{\B-\mathrm{cmpl}} \simeq \Mod(\B)^{hG}.
  \]
\end{thm}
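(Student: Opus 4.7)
The plan is to produce the equivalence as a restriction to $\B$-complete modules of the usual extension/restriction adjunction $F \dashv U$, where $F\colon \Mod(\A) \to \Mod(\B)^{hG}$ is given by $M \mapsto M \otimes_{\A} \B$ (well-defined with target $G$-equivariant $\B$-modules because $G$ acts on $\B$ by $\A$-algebra maps) and $U$ is taking homotopy fixed points. Since $F$ is strong symmetric monoidal, it suffices to show that the counit $F(U(N)) = N^{hG} \otimes_{\A} \B \to N$ is always an equivalence, and that the unit $M \to U(F(M)) = (M \otimes_{\A} \B)^{hG}$ exhibits the target as the $\B$-Bousfield localization $\L^\A_\B M$; the restricted adjunction is then an equivalence precisely on the $\B$-complete objects in the sense of \Cref{defn:complete}.

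For the counit I would first invoke dualizability of $\B$ as an $\A$-module: this ensures that $-\otimes_{\A} \B$ commutes with arbitrary homotopy limits, hence $N^{hG} \otimes_{\A} \B \simeq (N \otimes_{\A} \B)^{hG}$. The unramified hypothesis of \Cref{defn:unramified}, upgraded to a $G$-equivariant statement using that the two $G$-actions on $\B \otimes_{\A} \B$ (one from each tensor factor) commute, identifies $N \otimes_{\A} \B \simeq \prod_{g \in G} N$ with the \emph{second} $G$-action permuting factors, and passage to fixed points along this action recovers $N$.

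For the unit I would express $(M \otimes_{\A} \B)^{hG}$ as a totalization. Homotopy $G$-fixed points of any $G$-object $P$ are computed by the totalization of the standard cosimplicial object $\Map(G^{\bullet+1},P)^{G}$; iterating the unramified identification to get $\B^{\otimes_\A n+1} \simeq \Map(G^n,\B)$ compatibly with the cosimplicial structure maps lets me identify this totalization with $\Tot(M \otimes_{\A} \B^{\otimes_{\A} \bullet+1})$, the Amitsur cobar of $f$ applied to $M$. Since $\B$ is dualizable over $\A$, the Amitsur cobar computes the Bousfield localization at $\B$, exactly as in the classical descent-theoretic argument of \cite{hps}. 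Hence the unit is an equivalence precisely on the $\B$-complete modules, and in general it exhibits the completion.

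The main obstacle I anticipate is the careful identification in the previous paragraph, since it requires matching two distinct cosimplicial constructions — the Amitsur cobar of $f$ and the Hopf-style cobar computing $G$-fixed points — compatibly with coface and codegeneracy maps, not merely after totalization. This is precisely where the two hypotheses are jointly used: the unramified identification supplies the pointwise equivalence, while dualizability of $\B$ provides the Beck--Chevalley compatibilities needed to commute the tensor with the limit defining $(-)^{hG}$. Once the restricted adjunction is established as an equivalence, the symmetric monoidal refinement is automatic from the strong symmetric monoidality of $F$ together with that of the localization $\L^\A_\B$.
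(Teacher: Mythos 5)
Your proposal is correct in outline, but it takes a genuinely different route from the paper. The paper's proof is essentially a citation: it applies \cite[Thm.~2.30]{mnn_nilpotence} to obtain a symmetric monoidal equivalence between $\Mod(\A)_{\B-\mathrm{cpl}}$ and the totalization of the cosimplicial diagram of module categories $\Mod(\B^{\otimes_{\A}\bullet+1})$ (noting that the compact-generation-by-dualizables hypothesis of \emph{loc.~cit.} can be dropped because the key input is the Barr--Beck--Lurie theorem), and then uses the unramified condition to identify that cosimplicial diagram with the one computing $\Mod(\B)^{hG}$. You instead build the adjunction $F \dashv U$ between $\Mod(\A)$ and $\Mod(\B)^{hG}$ directly and check the unit and counit by hand. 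The two arguments consume the same inputs in the same places --- dualizability to commute $-\otimes_{\A}\B$ past limits and to identify the cobar totalization with $\L^\A_\B$, unramifiedness to convert tensor powers of $\B$ into products over $G$ --- but yours works at the level of objects and avoids monadicity entirely, at the cost of the two cosimplicial identifications you rightly flag (matching the Amitsur complex with the fixed-point cobar, and identifying the $G$-action on $N \otimes_{\A} \B$ as the coinduced one in the counit computation; the latter is the same verification the paper performs once, at the categorical level). Be aware that the step ``the Amitsur totalization computes $\L^\A_\B M$ when $\B$ is dualizable'' is precisely the content of \cite[Eq.~(2.22)]{mnn_nilpotence} and its comparison with the cobar construction, so you are not getting it for free and should either cite it or reprove it; granting that, your argument is legitimate, somewhat more self-contained, and sidesteps the hypothesis-checking the paper must do when invoking \cite[Thm.~2.30]{mnn_nilpotence}.
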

\begin{proof}
    We wish to apply \cite[Thm.~2.30]{mnn_nilpotence} (with $\cC = \Mod(\A)$); in \emph{loc.~cit.} the authors work under the hypothesis of \cite[Hyp.~2.26]{mnn_nilpotence}, however a careful reading of the proof shows that it is not necessary to assume that $\Mod(\A)$ is compactly generated by dualizable objects. Indeed, the key input is Lurie's version of the Barr--Beck theorem \cite[Cor.~4.7.6.3]{ha} for which no such assumption is required (see also Footnote 5 of \cite{behrens_shah}).

     Hence, applying \cite[Thm.~2.30]{mnn_nilpotence} gives an equivalence of symmetric monoidal $\infty$-categories
\[
\Mod(\A)_{\B-\mathrm{compl}} \simeq \Tot\left(\xymatrix{\Mod(\B) \ar@<0.5ex>[r] \ar@<-0.5ex>[r]&  \Mod(\B \otimes_{\A} \B) \ar@<1ex>[r] \ar@<0.3333ex>[r]\ar@<-0.3333ex>[r]\ar@<-1ex>[r] & \cdots}  \right),
\]
where we have used the symmetric monoidal equivalence of $\infty$-categories $\Mod_{\Mod(\A)}(\B) \simeq \Mod(\B)$, see \cite[Cor.~3.4.1.9]{ha}. By assumption, $\B \otimes_{\A} \B \simeq \prod_{G} \B$, and similar for the higher terms. We also observe that the functor that sends a commutative algebra object to its category of modules commutes with finite products (see \cite[Proposition 6.16]{DAGVII}, noting that every stable presentable $\infty$-category is linear over the sphere spectrum \cite[Example 6.3]{DAGVII}), and therefore $\Mod(\prod_G \B) \simeq \prod_G \Mod(\B)$, and similar for the higher terms. By the discussion before the theorem, the right hand side identifies with the totalization computing the homotopy fixed points for the action of $G$ on the $\infty$-category $\Mod(\B)$, and the result follows.
\end{proof}

% \[
% \Mod_{\cE}(A)_{B-\mathrm{compl}} \simeq \Tot\left(\xymatrix{\Mod_{\cE}(B) \ar@<0.5ex>[r] \ar@  <-0.5ex>[r]&  \Mod_{\cE}(B \otimes_{A} B) \ar@<1ex>[r] \ar@<0.3333ex>[r]\ar@<-0.3333ex>[%r] \ar@<-1ex>[r] & \cdots}  \right)
For applications to the Picard group, it is easiest to work with the subcategory of cellular objects in $\Mod(\A)$. In order to ensure that $\B$ is in this category, we assume that $\B$ is in the thick subcategory generated by $\A$ and its bigraded suspensions (in the category $\Mod(\A)$), which also implies that $\B$ is a dualizable $\A$-module.

We can then define a symmetric monoidal category $\Mod_{\cE}(\A)_{\B-\mathrm{cmpl}}$ of cellularly $B$-complete $\A$-modules, exactly as in \Cref{defn:complete} (where we test against $N \in \Mod_{\cE}(\A)$).

  \begin{rem}\label{rem:commadjoints}
Because $\cell$ is right adjoint to the inclusion, it follows from the definitions that if $N \in \Mod(\A)_{\B-\mathrm{cmpl}}$, then  $\cell (N) \in \Mod_{\cell}(\A)_{\B-\mathrm{cmpl}}$, i.e., cellularization restricts to a functor $\Mod(\A)_{\B-\mathrm{cmpl}} \to \Mod_{\cell}(\A)_{\B-\mathrm{cmpl}}$, see also \cite[Lem.~4.3(1)]{behrens_shah}. Moreover, by \cite[Lem.~4.3(3)]{behrens_shah} the adjunction $\iota \colon \Mod_{\cell}(\A) \leftrightarrows \Mod(\A) \colon \cell$ induces an adjunction $\iota' \colon \Mod_{\cell}(\A)_{\B-\mathrm{cmpl}} \leftrightarrows  \Mod(\A)_{\B-\mathrm{cmpl}}\colon \cell$ where $\iota'$ is fully faithful, $\cell$ is the restriction of the cellularization functor to $\Mod(\A)_{\B-\mathrm{cmpl}}$, and $\iota' \simeq \L_{\A}^{\B} \circ \iota$. One then identifies $\Mod_{\cE}(\A)_{\B-\mathrm{cmpl}}$ with the essential image of $ \cell$ restricted to $\Mod(\A)_{\B-\mathrm{cmpl}}$. Finally, we note that if $M \in \Mod_{\cell}(\A)_{\B-\mathrm{cmpl}}$, then it need not be $\B$-complete as an $\A$-module.
\end{rem}

\begin{thm}\label{thm:celldescent}
 Suppose $S$ is a Noetherian scheme of finite Krull dimension. Let $f \colon \A \to \B$ be an unramified morphism in $\SH(S)$ and suppose $\B$ is in the thick subcategory generated by $\A$. Then, there is a symmetric monoidal equivalence of $\infty$-categories
  \[
\Mod_{\cE}(\A)_{\B-\mathrm{cmpl}} \simeq \Mod_{\cE}(\B)^{hG}.
  \]
\end{thm}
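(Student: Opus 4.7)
The plan is to apply the same descent argument used in the proof of \Cref{thm:descent}, but working throughout in the symmetric monoidal $\infty$-category $\cC = \Mod_{\cE}(\A)$ instead of $\Mod(\A)$. As recalled in \Cref{sec:backmotivic}, $\Mod_{\cE}(\A)$ is a presentable stable symmetric monoidal $\infty$-category with compact tensor unit, and the hypothesis that $\B$ lies in the thick subcategory of $\Mod(\A)$ generated by $\A$ ensures that $\B$ is an object of $\cC$ which is compact and dualizable there. Moreover, since $\Mod_{\cE}(\A)$ is closed under the tensor product, the unramified condition still provides an equivalence $\B \otimes_{\A} \B \simeq \prod_{g \in G} \B$ in $\cC$.

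With these observations, the proof of \Cref{thm:descent} applies essentially verbatim: the Barr--Beck theorem \cite[Cor.~4.7.6.3]{ha} is available in any presentable symmetric monoidal $\infty$-category, so one obtains a symmetric monoidal equivalence
\[
\Mod_{\cE}(\A)_{\B-\mathrm{cpl}} \simeq \Tot\!\left( \xymatrix{\Mod_{\cC}(\B) \ar@<0.5ex>[r] \ar@<-0.5ex>[r]&  \Mod_{\cC}(\B \otimes_{\A} \B) \ar@<1ex>[r] \ar@<0.3333ex>[r]\ar@<-0.3333ex>[r]\ar@<-1ex>[r] & \cdots}  \right),
\]
which the unramified condition then identifies with $\Mod_{\cC}(\B)^{hG}$.

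The main step still to carry out is the identification $\Mod_{\cC}(\B) \simeq \Mod_{\cE}(\B)$. The containment $\Mod_{\cE}(\B) \subseteq \Mod_{\cC}(\B)$ is immediate, since the generators $\Sigma^{p,q}\B$ of $\Mod_{\cE}(\B)$ are cellular as $\A$-modules (as $\B \in \Mod_{\cE}(\A)$) and $\Mod_{\cE}(\A)$ is closed under colimits. For the converse, given a $\B$-module $M$ whose underlying $\A$-module is cellular, the two-sided bar construction realizes $M$ as the geometric realization of a simplicial object in $\Mod(\B)$ whose $n$-th term is $M \otimes_{\A} \B^{\otimes_{\A} n} \otimes_{\A} \B$; each such term is a free $\B$-module on an $\A$-cellular object and thus lies in $\Mod_{\cE}(\B)$, so $M$ does as well. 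I expect this identification of module categories, together with verifying that the Barr--Beck-based proof indeed goes through in the cellular setting without appeal to compact dualizable generation, to be the main points requiring care.
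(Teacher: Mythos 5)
Your proposal is correct and follows essentially the same route as the paper: both reduce to \cite[Thm.~2.30]{mnn_nilpotence} applied with $\cC = \Mod_{\cE}(\A)$ after checking that $\B$ is dualizable there (the paper verifies this explicitly via \Cref{lem:cellular} and the internal Hom $\cell\,\iHom_{\A}(-,-)$, while you deduce it from thick-subcategory closure --- both are fine). Your bar-resolution argument identifying $\B$-module objects internal to $\Mod_{\cE}(\A)$ with cellular $\B$-modules is correct and supplies a detail that the paper's proof leaves implicit.
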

\begin{proof}
  This is similar to the previous theorem. As observed already our conditions imply that $\B \in \Mod^\omega_{\cE}(\A)$, so that we just need to show that $\B$ is dualizable in $\Mod_{\cE}(\A)$, i.e., for any $\mathbf{Z} \in \Mod_{\cE}(\A)$ the natural map
  \[
\cell(\iHom_\A(\B,\A)) \otimes_\A \mathbf{Z} \to \cell(\iHom_\A(\B,\mathbf{Z}))
  \]
  is an equivalence. But, using \Cref{lem:cellular} and the fact that $\B$ is dualizable in $\Mod(\A)$, we have
  \[
\begin{split}
  \cell(\iHom_\A(\B,\A)) \otimes_\A \mathbf{Z} & \simeq \cell(\iHom_\A(\B,\A) \otimes_\A \mathbf{Z})\\
  & \simeq \cell (\iHom_{\A}(\B,\mathbf{Z})),
\end{split}
  \]
  as required. Now we once again apply \cite[Thm.~2.30]{mnn_nilpotence}, and argue as in the previous theorem.
\end{proof}
\subsection{Application to Hermitian $K$-theory}

As mentioned previously, the main example to keep in mind for the previous section was the morphism $f \colon \KQ \to \KGL$. We now apply the results to this example. We begin with the following.
\begin{lem}\label{lem:fconds}
   Let $S$ be an arbitrary base scheme, then $f$ is an unramified morphism, and $\KGL$ is in the thick subcategory generated by $\KQ$.
\end{lem}
\begin{proof}
The key point for both of these claims is the existence of a cofiber sequence \cite[Thm.~3.4]{ro_slices} of $\KQ$-modules
\begin{equation}\label{eq:wood}
\Sigma^{1,1} \KQ \xr{\eta} \KQ \xr{f} \KGL \to \Sigma^{2,1} \KQ.
\end{equation}
This fiber sequence is only shown to exist for finite a finite dimensional regular and separated Noetherian base scheme, however since it exists over $\Spec(\mathbb{Z})$ it exists over an arbitrary base scheme (recall that both $\KQ$ and $\KGL$ are stable under base change). As an immediate consequence of this cofiber sequence we see that $\KGL \in \Thick_{\Mod(\KQ)}(\KQ)$.

The claim that $\KGL \otimes_{\KQ} \KGL \xr{\simeq} \prod_{g \in C_2} \KGL$ is \cite[Prop.~4.3.2]{mot_galois}; the proposition there is not over the base schemes that we specify, however the input for the proof is the fiber sequence \eqref{eq:wood}, and Thm.~3.4 and Eqs.~16 and 17 of \cite{ro_slices}, both which hold for arbitrary schemes, again by base change.
\end{proof}
Let $\Mod(\KQ)_{\eta-\mathrm{cmpl}}$ denote the full-subcategory of $\Mod(\KQ)$ consisting of those objects whose underlying spectrum is $\eta$-complete in the sense of \Cref{defn:etacmpl}.
\begin{lem}\label{lem:etacomp}
   There is a symmetric monoidal equivalence of $\infty$-categories
  \[
\Mod(\KQ)_{\eta-\mathrm{cmpl}} \simeq \Mod(\KQ)_{\KGL-\mathrm{cmpl}}.
  \]
\end{lem}
\begin{proof}
By \cite[Eq.~(2.22)]{mnn_nilpotence} for any $M \in \Mod(\KQ)$ there is an equivalence
\[
\L^{\KQ}_{\KGL}M \simeq \varprojlim (\cofib(I^{\otimes(k+1)} \to \KQ)\otimes_{\KQ} M),
\]
where $I = \fib(\KQ \xr{f} \KGL) \simeq \Sigma^{1,1}\KQ$ by \eqref{eq:wood}. Note that there is no ambiguity in our use of $\varprojlim$ here; the limit of a diagram in $\Mod(\KQ)$ is computed on the underlying diagram of motivic spectra, see \cite[Cor.~4.2.3.3]{ha}.

We have that $I^{\otimes(k+1)} \simeq \Sigma^{k+1,k+1} \KQ$, so that $\cofib(I^{\otimes(k+1)} \to \KQ) \simeq C(\eta^{k+1}) \otimes \KQ$. It follows that
\[
\L^{\KQ}_{\KGL}M \simeq \varprojlim (C(\eta^{k+1}) \otimes M) \simeq M^{\wedge}_\eta
\]
for any $M \in \Mod(\KQ)$. This gives an equivalence of $\infty$-categories $\Mod(\KQ)_{\eta-\mathrm{cmpl}} \simeq \Mod(\KQ)_{\KGL-\mathrm{cmpl}}$ which is easily seen to be symmetric monoidal.
   \end{proof}
   \begin{rem}
     There are several other ways to see this. As noted by the referee, we have that a $\KQ$-module $M$ is $\KGL$-complete if and only if $\Hom_{\KQ}(N,M) \simeq 0$ for all $\KGL$-local $\KQ$-modules. The category of $\KGL$-local $\KQ$-modules is generated by $\{ G[\eta^{-1}] \otimes \KQ \mid G \in \cal{G} \}$ where $\cal{G}$ denotes a set of generators for $\SH(S)$ (to see this, one can use the explicit formula for localization \cite[Con.~3.4]{mnn_nilpotence}), and it suffices to test only against these modules. By adjunction, we then see that $M$ is $\KGL$-complete if and only if $\Hom_{\SH(S)}(G[\eta^{-1}],\iota M)$ for each $G$ in $\cal{G}$, where $\iota M$ denotes the underlying motivic spectrum of the $\KQ$-module $M$. This is precisely the statement that the underlying motivic spectrum of $M$ is $\eta$-complete.

     For yet another way, we note that \eqref{eq:wood} gives an equivalence $\KGL \simeq \KQ \otimes C(\eta)$. We recall that $\eta$-completion is Bousfield localization at $C(\eta)$. We then have $L^{\KQ}_{\KGL} \simeq L_{\KQ \otimes C(\eta)}^{\KQ} \simeq L_{C(\eta)}$, where we have used the obvious motivic generalization of \cite[Lem.~4.3]{BakerJeanneret2002Brave}. Thus, the $\KGL$-completion of a $\KQ$-module agrees with the $\eta$-completion of the underlying motivic spectrum.
   \end{rem}
\begin{rem}
  Since every $\KGL$-module is automatically $\KGL$-complete \cite[Ex.~2.18]{mnn_nilpotence}, the previous lemma gives an alternative proof that any $\KGL$-module is $\eta$-complete, which is a special case of \cite[Lem.~2.1]{rso_homlim}.
\end{rem}
The following is then our first main result.
\begin{thm}\label{thm:symmonequiv}
  There is a symmetric monoidal equivalence of $\infty$-categories
  \[
\Mod(\KQ)_{\eta-\mathrm{cmpl}} \simeq \Mod(\KGL)^{hC_2}.
  \]
%If $\SH(S)$ is not generated by dualizable objects, then there is a symmetric monoidal equivalence of $\infty$-categories
 % \[
%\Mod_{\cE}(\KQ)_{\eta-\mathrm{cmpl}} \simeq \Mod_{\cE}(\KGL)^{hC_2}.
 % \]
 \end{thm}
\begin{proof}
  By \Cref{lem:fconds} we can apply \Cref{thm:descent}.   The proof is finished by \Cref{lem:etacomp}.
\end{proof}
We then have the following strengthening of \cite[Thm.~1.2]{rso_homlim}.
\begin{cor}\label{cor:monequivalence}
    There is an equivalence $\KQ^{\wedge}_{\eta} \simeq \KGL^{hC_2}$ in $\SH(S)$.
\end{cor}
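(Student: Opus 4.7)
The plan is to extract the equivalence at the level of underlying motivic spectra from the symmetric monoidal equivalence already established in \Cref{thm:symmonequiv}. First, I invoke \Cref{thm:symmonequiv} to obtain the symmetric monoidal equivalence
\[
\Phi \colon \Mod(\KQ)_{\eta-\mathrm{cmpl}} \xrightarrow{\sim} \Mod(\KGL)^{hC_2},
\]
whose left adjoint is extension of scalars along $f$ and whose right adjoint is $C_2$-homotopy fixed points. Since $\Phi$ is symmetric monoidal it preserves tensor units, and since it is an equivalence it is in particular fully faithful, so it identifies the endomorphism $E_\infty$-ring of the unit on the two sides.

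The key observation is then that the endomorphism ring of the unit in each category is precisely the object we care about, viewed in $\SH(S)$. On the left, the tensor unit of $\Mod(\KQ)_{\eta-\mathrm{cmpl}}$ is $\KQ^\wedge_\eta$ and its endomorphism ring (computed in $\Mod(\KQ)$ via the fully faithful inclusion, since the localization is symmetric monoidal) is $\KQ^\wedge_\eta$ itself. On the right, the tensor unit of $\Mod(\KGL)^{hC_2}$ is $\KGL$ equipped with its canonical $C_2$-action, and its endomorphism ring in the homotopy fixed point category is computed as the $C_2$-fixed points of the endomorphism ring of $\KGL$ in $\Mod(\KGL)$, that is, $\KGL^{hC_2}$.

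A cleaner way to package this, which I would actually write up, is to trace a single object through the right adjoint of $\Phi$. The unit $\KGL \in \Mod(\KGL)^{hC_2}$ is sent by the right adjoint to its underlying $\KQ$-module, which is $\KGL^{hC_2}$ computed in $\SH(S)$ (using \cite[Cor.~4.2.3.3]{ha} to compute limits in $\Mod(\KQ)$ on underlying motivic spectra, as already used in \Cref{lem:etacomp}). On the other hand, since $\Phi$ preserves units and is an equivalence, this fixed point object must be equivalent, in $\Mod(\KQ)_{\eta-\mathrm{cmpl}}$, to the unit $\KQ^\wedge_\eta$. Applying the forgetful functor to $\SH(S)$ yields the desired equivalence $\KQ^\wedge_\eta \simeq \KGL^{hC_2}$.

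The only subtlety, which is not really an obstacle once one has the theorem, is keeping straight where each equivalence lives: the identification of tensor units happens in $\Mod(\KQ)_{\eta-\mathrm{cmpl}}$, and one must then push down to $\SH(S)$ via the forgetful functor, using that $C_2$-homotopy fixed points in $\Mod(\KGL)^{hC_2}$ agree with the homotopy fixed points of the underlying motivic spectrum because the forgetful functor $\Mod(\KGL) \to \SH(S)$ preserves limits. With that in hand the corollary is immediate from \Cref{thm:symmonequiv}.
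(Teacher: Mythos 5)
Your proposal is correct and, in its ``cleaner'' packaging, is exactly the paper's argument: the equivalence of \Cref{thm:symmonequiv} is symmetric monoidal, so it matches tensor units, and applying the right adjoint $(-)^{hC_2}$ to the unit $\KGL$ of $\Mod(\KGL)^{hC_2}$ identifies $\KGL^{hC_2}$ with $\KQ^\wedge_\eta$ in $\SH(S)$. The preliminary detour through endomorphism rings of the unit is harmless but unnecessary.
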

\begin{proof}
  Since the equivalence is symmetric monoidal, the tensor unit of $\Mod(\KGL)^{hC_2}$ is equivalent, under the functor $(-)^{hC_2}$, to $\KQ^{\wedge}_{\eta}$, hence the result.
\end{proof}

We have a similar result for the cellular subcategory, which will prove useful in the sequel. Let us denote by $\Mod_{\cE}(\KQ)_{\eta-\mathrm{cmpl}}$ the essential image of $\cell$ applied to $\Mod(\KQ)_{\eta-\mathrm{cmpl}} \subset \Mod(\KQ)$ (note that these objects need not be $\eta$-complete in the usual sense).
\begin{lem}
There is a symmetric monoidal equivalence of $\infty$-categories
 \[\Mod_{\cE}(\KQ)_{\eta-\mathrm{cmpl}} \simeq \Mod_{\cE}(\KQ)_{\KGL-\mathrm{cmpl}} .\]
\end{lem}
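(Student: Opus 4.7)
The plan is to transport the argument of \Cref{lem:etacomp} into the cellular setting, showing that the cellular $\KGL$-completion and the cellular $\eta$-completion are the same endofunctor of $\Mod_{\cE}(\KQ)$. I would start by applying the formula \cite[Eq.~(2.22)]{mnn_nilpotence} directly inside the presentable symmetric monoidal $\infty$-category $\Mod_{\cE}(\KQ)$, with unit $\KQ$ and commutative algebra $\KGL$, to obtain, for any $M \in \Mod_{\cE}(\KQ)$,
\[
\overline{\L^{\KQ}_{\KGL}} M \simeq \varprojlim_k \bigl(\cofib(\bar I^{\otimes(k+1)} \to \KQ) \otimes_{\KQ} M\bigr),
\]
where $\bar I = \fib_{\Mod_{\cE}(\KQ)}(\KQ \to \KGL)$ and all limits, cofibers and tensor products are computed inside $\Mod_{\cE}(\KQ)$.

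Next I would unpack these constructions using the adjunction $\iota \dashv \cell$: since $\iota$ is fully faithful and cocontinuous, colimits (hence cofibers and tensor products) in $\Mod_{\cE}(\KQ)$ agree with those in $\Mod(\KQ)$, whereas limits (hence fibers) are obtained by applying $\cell$ to the corresponding limit in $\Mod(\KQ)$. From the cofiber sequence \eqref{eq:wood} we read off $\bar I \simeq \Sigma^{1,1}\KQ$ and $\cofib(\bar I^{\otimes(k+1)} \to \KQ) \simeq C(\eta^{k+1}) \otimes \KQ$, exactly as in the proof of \Cref{lem:etacomp}, so the displayed formula collapses to
\[
\overline{\L^{\KQ}_{\KGL}} M \simeq \cell\bigl(\varprojlim_k C(\eta^{k+1}) \otimes M\bigr) \simeq \cell(M^\wedge_\eta).
\]
On the other hand, the reflection onto $\Mod_{\cE}(\KQ)_{\eta-\mathrm{cmpl}}$, as defined just before the statement, is by construction the composite $M \mapsto \cell(M^\wedge_\eta)$, namely first $\eta$-complete in $\Mod(\KQ)$ and then cellularize. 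Hence the two Bousfield reflections of $\Mod_{\cE}(\KQ)$ coincide on objects, and so do their essential images. The symmetric monoidal structure on each side is induced from $\Mod_{\cE}(\KQ)$ by \cite[Prop.~2.2.1.9]{ha} applied to the same class of inverted maps, so the symmetric monoidal structures agree as well, giving the asserted equivalence.

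The main technical point requiring care is the interaction between \cite[Eq.~(2.22)]{mnn_nilpotence} and the cellular subcategory: one must verify that when (2.22) is invoked internally to $\Mod_{\cE}(\KQ)$, the intermediate terms reduce to the familiar constructions in $\Mod(\KQ)$ with $\cell$ applied only at the outermost limit. This is a formal consequence of $\iota$ being a fully faithful cocontinuous left adjoint, but it is the only step where the argument genuinely differs from \Cref{lem:etacomp} and therefore the step I would spell out most carefully.
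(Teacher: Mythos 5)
Your proposal is correct and takes essentially the same route as the paper: both apply \cite[Eq.~(2.22)]{mnn_nilpotence} internally to $\Mod_{\cE}(\KQ)$, use that limits there are computed by applying $\cell$ to the limit in $\Mod(\KQ)$ while colimits and tensor products are unchanged, and identify the resulting tower with $\cell(M^\wedge_\eta)$ via the cofiber sequence \eqref{eq:wood}. You simply spell out the formal points (full faithfulness and cocontinuity of $\iota$, identification of the reflection onto $\Mod_{\cE}(\KQ)_{\eta-\mathrm{cmpl}}$) that the paper leaves implicit.
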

\begin{proof}
  In light of \Cref{rem:commadjoints} applying $\cell$ to the equivalence of \Cref{lem:etacomp} gives the desired result.
\end{proof}

Finally, we point out the following, which will be useful in the Picard computations below.
\begin{lem}\label{lem:morita}
For a regular Noetherian scheme $S$, there is a symmetric monoidal equivalence of $\infty$-categories
\[
\Mod_{\cE}(\KGL) \simeq \Mod({\K(S)})
\]
between cellular $\KGL$-modules, and modules over the (connective) algebraic $K$-theory spectrum $\K(S)$. In the 2-complete setting over $\Spec(\C)$ we have
\[
\Mod_{\cE}(\KGL^{\wedge}_2) \simeq \Mod({\ku^{\wedge}_2}),
\]
where $\ku^\wedge_2$ denotes the 2-complete connective topological $K$-theory spectrum.
\end{lem}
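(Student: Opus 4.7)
The plan is to apply a Morita-style recognition principle: realize $\Mod_{\cE}(\KGL)$ as modules over an $E_\infty$-ring spectrum and then identify that ring. First I would exploit Bott periodicity $\Sigma^{2,1}\KGL \simeq \KGL$ to observe that every bigraded suspension $\Sigma^{p,q}\KGL$ is equivalent to the simplicial suspension $\Sigma^{p-2q,0}\KGL$. Combined with the definition of cellularity and the fact that shifts are automatic in any stable presentable $\infty$-category, this shows that $\KGL$ alone is a compact generator of $\Mod_{\cE}(\KGL)$ under colimits.

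Next I would apply Lurie's Schwede--Shipley / Morita recognition theorem (\cite[Thm.~7.1.2.1]{ha}, in its symmetric monoidal incarnation where the compact generator is the tensor unit) to deduce a symmetric monoidal equivalence
\[
\Mod_{\cE}(\KGL) \simeq \Mod(E), \qquad E := \End_{\Mod(\KGL)}(\KGL),
\]
where $E$ is regarded as an $E_\infty$-ring spectrum in ordinary spectra via the global sections/unit-mapping functor $\SH(S) \to \Sp$. The homotopy groups of $E$ are then computed by representability \eqref{eq:rep}: $\pi_n E = \pi_{n,0}\KGL \cong K_n(S)$ for all $n \in \Z$. For regular $S$ we have Quillen's vanishing $K_n(S) = 0$ for $n < 0$, so $E$ is automatically connective with $\pi_*E \cong K_*(S)$; matching multiplicative structures, this identifies $E \simeq \K(S)$ as $E_\infty$-ring spectra, giving the first equivalence.

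For the 2-complete statement over $\Spec(\C)$, I would run the same argument with $\KGL$ replaced by $\KGL^\wedge_2$ (Bott periodicity, compact generation, and the Morita recognition all pass through 2-completion), obtaining $\Mod_{\cE}(\KGL^\wedge_2) \simeq \Mod(\K(\C)^\wedge_2)$. To finish one invokes Suslin's rigidity theorem for the $K$-theory of algebraically closed fields, which gives an equivalence of $E_\infty$-ring spectra $\K(\C)^\wedge_2 \simeq \ku^\wedge_2$. The main technical obstacle is verifying that the Morita equivalence can be upgraded to a \emph{symmetric monoidal} equivalence rather than just an equivalence of presentable stable $\infty$-categories; this is where one genuinely uses that $\KGL$ is the unit of the symmetric monoidal structure on $\Mod_{\cE}(\KGL)$, so that its endomorphism object carries a canonical $E_\infty$ structure compatible with the tensor product of modules.
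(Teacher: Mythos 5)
Your proposal is correct and follows essentially the same route as the paper: use $(2,1)$-periodicity to reduce bigraded suspensions to simplicial ones, apply the Lurie--Schwede--Shipley Morita theorem (the paper cites \cite[Prop.~7.1.2.7]{ha}) with the compact generator being the tensor unit to get a symmetric monoidal equivalence with $\Mod(\End_{\Mod(\KGL)}(\KGL))$, identify the endomorphism ring via representability \eqref{eq:rep}, and invoke Suslin's theorem for the 2-complete case. Your extra observation that connectivity of the endomorphism ring follows from $\pi_{n,0}\KGL \cong \K_n(S)$ vanishing in negative degrees is exactly the point the paper makes in the remark following the lemma.
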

\begin{proof}
Recall that $\KGL$ is $(2,1)$-periodic; there is an equivalence $\Sigma^{2,1}\KGL \simeq \KGL$. Hence the localizing subcategory generated by $\{ \Sigma^{p,q}\KGL \}_{p,q \in \Z}$ is the same as the localizing subcategory generated by $\{ \Sigma^{p,0} \KGL \}_{p \in \Z}$. Since the suspension functor in $\SH(S)$ is given by smashing with $S^{1,0}$, the derived Morita theory of Lurie--Schwede--Shipley \cite[Prop.~7.1.2.7]{ha} gives an equivalence of symmetric monoidal $\infty$-categories
  \[
\Mod_{\cE}(\KGL) \simeq \Mod(\R),
  \]
  where $\R \simeq \End_{\Mod(\KGL)}(\KGL)$ naturally has the structure of a commutative ring spectrum. But, by \eqref{eq:rep} and adjunction, $\R \simeq \K(S)$. The argument in the 2-complete setting is similar; Morita theory tells us that $\Mod_{\cE}(\KGL^\wedge_2) \simeq \Mod(\K(\C)^{\wedge}_2)$. But Suslin's theorem \cite{suslin} implies that $\K(\C)^{\wedge}_2 \simeq \ku^{\wedge}_2$.
\end{proof}
\begin{rem}
  If $S$ is not regular Noetherian, then there is a similar result where Quillen's algebraic $K$-theory is replaced by Weibel's homotopy invariant $K$-theory (the two agree when $S$ is regular Noetherian).
\end{rem}
\begin{rem}
  At first this result may seem odd since $\KGL$ is $(2,1)$-periodic, while $\K(S)$ is connective, however note that
  \[
\pi_t\R \cong \pi_t\End_{\Mod(\KGL)}(\KGL) \cong \pi_0\Hom_{\SH(S)}(S^{t,0},\KGL) \cong \pi_{t,0}\KGL
  \]
  is indeed 0 for $t < 0$.
\end{rem}
\section{The cellular Picard group of Hermitian $K$-theory}
In this section we use the previous results on descent theory to give a calculation of the Picard group of invertible cellular $\KQ^\wedge_2$-modules over $\Spec(\C)$.
\subsection{Picard groups and Picard spectra}
We first recall the notation of the Picard spectrum from the work of Mathew and Stojanoska \cite{mathew_stojanoska}, referring the reader to either their paper, or to work of Gepner and Lawson \cite{gepner_lawson} for more information. Let $\cC$ be a symmetric monoidal presentable $\infty$-category with monoidal unit $\mathbf{1}$, and let $\pic(\cC)$ denote the group of isomorphism classes of invertible objects in $\Ho(\cC)$ (since $\cC$ is assumed to be presentable, the Picard group does indeed form a set, see \cite[Rem.~2.1.4]{mathew_stojanoska}). It turns out to be more useful to remember more information than just isomorphism classes of invertible objects; rather we should remember all higher equivalences between objects as well.  Thus, we let $\Pic(\cC)$ denote the $\infty$-groupoid of invertible objects in $\cC$ and equivalences between them. This is a grouplike $\mathbb{E}_\infty$-space, and hence there exists a connective spectrum $\pics(\cC)$ with $\Omega^{\infty}\pics(\cC) \simeq \Pic(\cC)$. We have the following description of the homotopy groups of $\pics(\cC)$:
\begin{equation}\label{eq:pics}
\pi_i\pics(\cC) =   \begin{cases}
\pic(\cC) & i = 0, \\
\pi_0(\End(\mathbf{1}))^\times & i=1, \text{ and } \\
\pi_{i-1}(\End(\mathbf{1})) & i \ge 2.
\end{cases}
\end{equation}
\begin{defn}
  For a commutative motivic ring spectrum $\A$ we let $\pics(\A)$ denote $\pics(\Mod(\A))$. For the symmetric monoidal $\infty$-category $\Mod(\A)_{\B-\mathrm{cmpl}}$, we let $\pics(\A)_{\B-\mathrm{cmpl}}$ denote $\pics(\Mod(\A)_{\B-\mathrm{cmpl}})$. We similarly define the Picard spaces $\Pic(\A)$ and $\Pic(\A)_{\B-\mathrm{cmpl}}$ as the Picard spaces of the respective monoidal categories.
\end{defn}

As a functor $\pics \colon \Cat^\infty \to \Sp_{\ge 0}$ from the $\infty$-category of symmetric monoidal $\infty$-categories to the $\infty$-category of connective spectra, $\pics$ has the important property that it commutes with limits \cite[Prop.~2.2.3]{mathew_stojanoska} (and similar for $\Pic$, as a functor to spaces). Thus, we get the following from \Cref{thm:descent}.
 \begin{prop}\label{prop:picdescent}
 Suppose $S$ is a Noetherian scheme of finite Krull dimension. Let $f \colon \A \to \B$ be an unramified morphism in $\SH(S)$ with $\B$ a dualizable $\A$-module. Then, there is an equivalence of connective spectra
  \[
\pics(\A)_{\B-\mathrm{cmpl}} \simeq \tau_{\ge 0} \pics(\B)^{hG}.
  \]
  There is an associated homotopy fixed point spectral sequence
  \[
H^s(G;\pi_t\pics(\B)) \implies \pi_{t-s} \pics(\B)^{hG}
  \]
whose abutment for $t = s$ is the Picard group $\pic(\A)_{\B-\mathrm{cmpl}}$.
\end{prop}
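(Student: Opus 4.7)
The plan is to deduce this proposition directly from \Cref{thm:descent} by transporting the symmetric monoidal equivalence of $\infty$-categories through the functor $\pics$. By hypothesis, \Cref{thm:descent} applies to $f$, yielding a symmetric monoidal equivalence $\Mod(\A)_{\B-\mathrm{cpl}} \simeq \Mod(\B)^{hG}$, where the right-hand side is the totalization of the cobar construction on $f$. Applying $\pics$ to both sides gives $\pics(\A)_{\B-\mathrm{cpl}} \simeq \pics(\Mod(\B)^{hG})$.

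Next, I would use the fact that $\pics \colon \Cat^\infty \to \Sp_{\ge 0}$ commutes with limits, as cited from \cite[Prop.~2.2.3]{mathew_stojanoska}. Since $\Mod(\B)^{hG}$ is by definition the limit of the cosimplicial diagram encoding the $G$-action on $\Mod(\B)$, this interchange of $\pics$ with the limit produces an equivalence $\pics(\Mod(\B)^{hG}) \simeq \varprojlim \pics(\Mod(\B)^{\bullet})$, which is precisely the $G$-homotopy fixed points of $\pics(\B)$, computed in the $\infty$-category of connective spectra. The only subtlety is that homotopy fixed points in connective spectra differ from those in spectra by a connective cover; concretely, the underlying spectrum of the fixed points in $\Sp_{\ge 0}$ is $\tau_{\ge 0}\pics(\B)^{hG}$, where now the fixed points on the right are taken in all spectra. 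This is precisely what forces the $\tau_{\ge 0}$ in the statement.

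For the spectral sequence, I would simply invoke the standard conditionally convergent homotopy fixed point spectral sequence associated to a $G$-action on a spectrum, namely
\[
H^s(G;\pi_t\pics(\B)) \implies \pi_{t-s}\pics(\B)^{hG}.
\]
Combining with the previous paragraph, the term $\pi_0\pics(\B)^{hG} \cong \pi_0\tau_{\ge 0}\pics(\B)^{hG} \cong \pi_0\pics(\A)_{\B-\mathrm{cmpl}}$ is identified, via \eqref{eq:pics}, with $\pic(\A)_{\B-\mathrm{cmpl}}$, so the abutment in total degree $t-s=0$ is the claimed Picard group.

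I expect no serious obstacle: the theorem is essentially a formal consequence of \Cref{thm:descent} together with the limit-preserving property of $\pics$. The one point that requires care is the connective truncation, since the naive homotopy fixed points of a connective spectrum need not remain connective; but this is precisely why the statement is phrased with $\tau_{\ge 0}$, and it does not affect the identification of the $(t-s=0)$-column of the spectral sequence.
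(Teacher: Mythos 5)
Your proposal is correct and matches the paper's argument exactly: the paper derives this proposition from \Cref{thm:descent} together with the fact that $\pics$ commutes with limits (citing \cite[Prop.~2.2.3]{mathew_stojanoska}), with the $\tau_{\ge 0}$ arising for precisely the reason you give. The identification of the $t=s$ abutment via \eqref{eq:pics} is likewise the intended (and correct) final step.
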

Of course, we are usually more interested in $\pic(\A)$ itself, however we note the following.
\begin{lem}\label{lem:picsubgroup}
  If $\A$ is $\B$-complete, then the Picard group $\pic(\A) \subseteq \pic(\A)_{\B-\mathrm{cmpl}}$.
\end{lem}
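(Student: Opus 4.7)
The plan is to exhibit the map $\pic(\A) \to \pic(\A)_{\B-\mathrm{cmpl}}$ as the one induced by the symmetric monoidal Bousfield localization $L := \L^{\A}_{\B}\colon \Mod(\A) \to \Mod(\A)_{\B-\mathrm{cmpl}}$, and then prove it is injective. The crucial claim, which drives everything, is that under the hypothesis that $\A$ is $\B$-complete, every invertible $\A$-module is automatically $\B$-complete, so the unit $M \to L(M)$ is an equivalence for every invertible $M$.

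First I would set up $L$. Since $\Mod(\A)_{\B-\mathrm{cmpl}}$ is a symmetric monoidal localization of $\Mod(\A)$ (as noted just before \Cref{defn:complete}), $L$ is symmetric monoidal and therefore sends invertible objects to invertible objects, giving a group homomorphism $L_*\colon \pic(\A) \to \pic(\A)_{\B-\mathrm{cmpl}}$.

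Next, I would prove the key claim. Let $M \in \Mod(\A)$ be invertible with inverse $M^{-1}$; in particular $M$ is dualizable and $\iHom_{\A}(M,\A) \simeq M^{-1}$, so that $\iHom_{\A}(M,-) \simeq M^{-1}\otimes_{\A} -$ on all of $\Mod(\A)$. Given any $N \in \Mod(\A)$ with $N \otimes_{\A} \B \simeq 0$, the tensor--hom adjunction gives
\[
\Hom_{\A}(N, M) \;\simeq\; \Hom_{\A}(N \otimes_{\A} M^{-1},\, \A).
\]
Since $(N \otimes_{\A} M^{-1}) \otimes_{\A} \B \simeq (N \otimes_{\A} \B) \otimes_{\A} M^{-1} \simeq 0$, and $\A$ is $\B$-complete by hypothesis (\Cref{defn:complete}), the right-hand side is contractible. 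Thus $M$ is $\B$-complete, and the same argument applied to $M^{-1}$ shows the inverse is complete too.

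Finally, I would deduce injectivity. If $M$ is invertible and $L_*[M] = [\A]$, i.e.\ $L(M) \simeq \A$ in $\Mod(\A)_{\B-\mathrm{cmpl}}$, then since $M$ has just been shown to be $\B$-complete, the unit $M \to L(M)$ is an equivalence, and so $M \simeq \A$ already in $\Mod(\A)$. Hence $L_*$ is injective, proving $\pic(\A) \subseteq \pic(\A)_{\B-\mathrm{cmpl}}$. There is no real obstacle: the only subtlety is to invoke the correct form of duality for invertibles (so that $\Hom_{\A}(N,M)\simeq \Hom_{\A}(N\otimes_{\A} M^{-1},\A)$ holds for arbitrary $N$, not merely compact $N$), and this is automatic from the fact that invertible objects in a symmetric monoidal $\infty$-category are dualizable.
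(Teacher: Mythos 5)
Your proof is correct and follows essentially the same route as the paper: both arguments hinge on the fact that invertible $\A$-modules are dualizable and therefore inherit $\B$-completeness from $\A$, so that the localization $\L^{\A}_{\B}$ is fully faithful on Picard elements. The only difference is cosmetic — the paper verifies completeness of $M$ via the formula $\L^{\A}_{\B}M \simeq \L^{\A}_{\B}\A \otimes_{\A} M$ for dualizable $M$ (citing \cite[Lem.~3.3.1]{hps}), whereas you check it directly from \Cref{defn:complete} using the tensor--hom adjunction, which is a perfectly valid and slightly more self-contained way to reach the same conclusion.
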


\begin{proof}
  It is a general fact about Picard groups that all elements of $\pic(\A)$ are dualizable in $\Mod(\Ho(\A))$, see \cite[Prop.~A.2.8]{hps}. This implies that for $M \in \pic(\A)$ we have $L_{\B}^{\A}M \simeq L_{\B}^{\A}\A \otimes_{\A} M$ by \cite[Lem.~3.3.1]{hps}. Since $\A$ is $\B$-complete, $L_{\B}^{\A}\A \simeq \A$, and it is easy to see that this implies the result.
\end{proof}

Unfortunately, it seems difficult to approach the Picard group directly. For example, Baker and Richter \cite{baker_richter} showed that for $\R$ a commutative ring spectrum, there is an injection $\Phi \colon \pic(\R_*) \to \pic(\R)$, where $\pic(\R_*)$ denotes the algebraic Picard group if invertible $\R_*$-modules. Although we will not use it, it is worthwhile to point out the following, where we let $\pic_{\cE}(\A) = \pic(\Ho(\Mod_{\cE}(\A))$.
\begin{lem}\label{lem:picalg}
  For a motivic commutative ring spectrum $\A$, there is an injection $\Phi \colon \pic(\A_{\ast,\ast}) \to \pic_{\cE}(\A)$.
\end{lem}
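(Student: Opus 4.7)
The plan is to mimic Baker and Richter's argument \cite{baker_richter} for $\pic(\R_*) \hookrightarrow \pic(\R)$, adapted to the bigraded motivic setting. The essential inputs are: (a) any invertible bigraded $\A_{*,*}$-module $L$ is finitely generated projective of rank one, and hence a summand of a finite direct sum $F \coloneqq \bigoplus_{i=1}^n \Sigma^{p_i,q_i}\A_{*,*}$ of bigraded shifts of $\A_{*,*}$; (b) setting $\tilde F \coloneqq \bigoplus_i \Sigma^{p_i,q_i}\A$, the natural map $\pi_{*,*}\End_{\A}(\tilde F) \to \End_{\A_{*,*}}(F)$ is an isomorphism, so that any $\A_{*,*}$-linear endomorphism of $F$ lifts to an $\A$-module endomorphism of $\tilde F$; and (c) a map between cellular $\A$-modules that induces an isomorphism on $\pi_{*,*}$ is an equivalence, because its cofiber $C$ is cellular with vanishing bigraded homotopy, and the only such object is zero --- the subcategory of cellular $Y$ with $[Y,C]_{\Mod(\A)} = 0$ is localizing and contains every $\Sigma^{p,q}\A$, hence equals $\Mod_{\cE}(\A)$, forcing $C \simeq 0$.

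Given $L \in \pic(\A_{*,*})$, fix a splitting $L \oplus L' \cong F$ and let $p \colon F \to F$ be the projection onto $L$. Using (b), lift $p$ to an $\A$-module endomorphism $\tilde p \colon \tilde F \to \tilde F$ (the lift need not itself be idempotent) and set
\[
\tilde L \coloneqq \colim\bigl(\tilde F \xrightarrow{\tilde p} \tilde F \xrightarrow{\tilde p} \cdots\bigr) \in \Mod_{\cE}(\A).
\]
Since $\pi_{*,*}$ commutes with filtered colimits of motivic spectra and $p$ is idempotent, one has $\pi_{*,*}\tilde L \cong L$. Performing the analogous construction with $L^{-1}$ yields $\tilde L^{-1} \in \Mod_{\cE}(\A)$. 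Because $\tilde F$ is a finite sum of bigraded suspensions of $\A$, we have $\pi_{*,*}(\tilde F \otimes_\A M) \cong F \otimes_{\A_{*,*}} \pi_{*,*}M$ for every $\A$-module $M$; passing to the telescope, and using projectivity of $L$ to commute $L \otimes_{\A_{*,*}} -$ with colimits, yields
\[
\pi_{*,*}\bigl(\tilde L \otimes_\A \tilde L^{-1}\bigr) \cong L \otimes_{\A_{*,*}} L^{-1} \cong \A_{*,*}.
\]
The identity element in bidegree $(0,0)$ is represented by a map $\A \to \tilde L \otimes_\A \tilde L^{-1}$ of cellular $\A$-modules inducing an isomorphism on $\pi_{*,*}$, and is therefore an equivalence by (c). Thus $\tilde L \in \pic_{\cE}(\A)$, and we define $\Phi(L) \coloneqq [\tilde L]$.

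Standard arguments then show that $\Phi$ is a well-defined group homomorphism --- the class $[\tilde L]$ is independent of the chosen splitting and of the particular lift $\tilde p$, and multiplicativity of $\Phi$ follows from the telescope description of the tensor product. Injectivity is immediate: if $\tilde L \simeq \A$ in $\Mod_{\cE}(\A)$, then $L \cong \pi_{*,*}\tilde L \cong \A_{*,*}$, so $L$ represents the trivial class in $\pic(\A_{*,*})$. The principal technical obstacle is the K\"unneth-style computation of $\pi_{*,*}(\tilde L \otimes_\A \tilde L^{-1})$; this however reduces, via the telescope construction, to the elementary identification of $\pi_{*,*}(\tilde F \otimes_\A M)$ for a finite bigraded free $\tilde F$, which is transparent from the splitting of the tensor product into a direct sum.
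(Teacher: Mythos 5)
Your proposal is correct and follows essentially the same route as the paper, which simply observes that the Baker--Richter argument carries over: the realization of an invertible $\A_{*,*}$-module as a telescope of finite free $\A$-modules lands in $\Mod_{\cE}(\A)$, and injectivity follows because $\pi_{*,*}$ detects equivalences between cellular objects. The only cosmetic difference is that you compute $\pi_{*,*}(\tilde L \otimes_\A \tilde L^{-1})$ directly from the telescope of free modules, where the paper instead cites the K\"unneth spectral sequence for cellular objects; both yield the same conclusion.
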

\begin{proof}
  The same proof as given by Baker--Richter \cite[Sec.~1]{baker_richter} works in this context. Namely, given an invertible module $\overline M \in \pi_{\ast,\ast}\A$ (which is of course projective), one can always produce an $\A$-module $M$ with $\pi_{\ast,\ast}M \cong \overline{M}$. Inspection of the proof reveals that $M$ is produced as a colimit of free $\A$-modules, and so is an element of the cellular Picard group $\pic_{\cE}(\A)$. In fact, to check that $\Phi$ gives  a well-defined group homomorphism relies on the existence of a K{\"u}nneth spectral sequence, which does exist for cellular objects, see \cite[Prop.~7.10]{dugger_isaksen_cell}. This is a monomorphism because $\pi_{\ast,\ast}$ detects weak equivalences between cellular motivic spectra, see \cite[Sec.~7.9]{dugger_isaksen_cell}.
\end{proof}
If we focus our attention the cellular Picard group, then we have the following analog of \Cref{prop:picdescent}, which follows from \Cref{thm:celldescent}.
 \begin{prop}\label{prop:piccelldescent}
 Suppose $S$ is a Noetherian scheme of finite Krull dimension. Let $f \colon \A \to \B$ be an ramified morphism in $\SH(S)$, and suppose $\B$ is in the thick subcategory generated by $\A$. Then, there is an equivalence of connective spectra
  \[
\pics_{\cE}(\A)_{\B-\mathrm{cmpl}} \simeq \tau_{\ge 0} \pics_{\cE}(\B)^{hG}.
  \]
  There is an associated homotopy fixed point spectral sequence
  \[
H^s(G;\pi_t\pics_{\cE}(\B)) \implies \pi_{t-s} \pics_{\cE}(\B)^{hG}
  \]
whose abutment for $t = s$ is the Picard group $\pic_{\cE}(\A)_{\B-\mathrm{cmpl}}$.
\end{prop}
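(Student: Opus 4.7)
The plan is to deduce this directly from the cellular descent theorem \Cref{thm:celldescent} by applying the functor $\pics$ and using its limit-preservation. First I would invoke \Cref{thm:celldescent} under the stated hypotheses (unramified $f$ with $\B$ in the thick subcategory generated by $\A$) to obtain a symmetric monoidal equivalence
\[
\Mod_{\cE}(\A)_{\B-\mathrm{cpl}} \simeq \Mod_{\cE}(\B)^{hG}.
\]
Then I apply the Picard spectrum functor $\pics \colon \Cat^\infty \to \Sp_{\ge 0}$ to both sides. Since $\pics$ commutes with limits (the property recalled just before \Cref{prop:picdescent}, coming from \cite[Prop.~2.2.3]{mathew_stojanoska}), and since the homotopy fixed points $(-)^{hG}$ are by definition a limit over $BG$, we get
\[
\pics_{\cE}(\A)_{\B-\mathrm{cpl}} \simeq \pics(\Mod_{\cE}(\B)^{hG}) \simeq \bigl(\pics_{\cE}(\B)\bigr)^{hG}.
\]
The only subtle point is that $\pics$ takes values in connective spectra, while the homotopy fixed points of a connective spectrum under a $G$-action need not remain connective; this forces the $\tau_{\ge 0}$ on the right-hand side, giving the stated equivalence
\[
\pics_{\cE}(\A)_{\B-\mathrm{cpl}} \simeq \tau_{\ge 0}\, \pics_{\cE}(\B)^{hG}.
\]

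For the second claim, the homotopy fixed point spectral sequence is the standard one associated to the $G$-action on $\pics_{\cE}(\B)$, whose $E_2$-page has the form $H^s(G;\pi_t\pics_{\cE}(\B))$ and which converges to $\pi_{t-s}\pics_{\cE}(\B)^{hG}$. Since $\tau_{\ge 0}$ does not alter $\pi_0$, the abutment in degree $t=s$ computes
\[
\pi_0 \bigl(\pics_{\cE}(\B)^{hG}\bigr) \cong \pi_0 \bigl(\tau_{\ge 0}\pics_{\cE}(\B)^{hG}\bigr) \cong \pi_0 \pics_{\cE}(\A)_{\B-\mathrm{cpl}} \cong \pic_{\cE}(\A)_{\B-\mathrm{cmpl}},
\]
where the final identification uses the standard formula \eqref{eq:pics} for $\pi_0\pics$.

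The proof is essentially formal once \Cref{thm:celldescent} is in hand, so I do not expect substantive obstacles; the only thing to be careful about is the bookkeeping of the connective cover, which is why the equivalence on the level of spectra must be stated with $\tau_{\ge 0}$ rather than as a plain equivalence. This minor point does not affect the spectral sequence nor the identification of the abutment in degree zero, because connective covers preserve non-negative homotopy groups.
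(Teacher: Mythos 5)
Your proposal is correct and matches the paper's intended argument exactly: the paper derives \Cref{prop:piccelldescent} by applying the limit-preserving functor $\pics$ to the equivalence of \Cref{thm:celldescent}, with the $\tau_{\ge 0}$ accounting for the fact that $\pics$ lands in connective spectra while $(-)^{hG}$ is computed in all spectra. Your identification of the $t=s$ abutment via $\pi_0\pics \cong \pic$ is likewise the paper's (implicit) reasoning.
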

The analog of \Cref{lem:picsubgroup} is the following.
\begin{lem}\label{lem:piccellsubgroup}
  If $\A$ is a cellularly $\B$-complete $\A$-module, then the Picard group $\pic_{\cE}(\A) \subseteq \pic_{\cE}(\A)_{\B-\mathrm{cmpl}}$.
\end{lem}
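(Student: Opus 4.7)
The plan is to mirror the proof of \Cref{lem:picsubgroup}, substituting the cellular localization $\overline{\L}^{\A}_{\B}$ (as described in the preceding lemma) for the ordinary localization $\L^{\A}_{\B}$, and exploiting the compatibility of cellularization with tensoring by cellular modules provided by \Cref{lem:cellular}. Given $M \in \pic_{\cE}(\A)$, the goal is to show that the unit map $M \to \overline{\L}^{\A}_{\B} M$ is an equivalence, which will place $M$ in $\Mod_{\cE}(\A)_{\B-\mathrm{cmpl}}$.

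First I would observe that any invertible object $M$ in the symmetric monoidal subcategory $\Mod_{\cE}(\A)$ is automatically dualizable in the ambient category $\Mod(\A)$: if $N \in \Mod_{\cE}(\A)$ satisfies $M \otimes_{\A} N \simeq \A$, then $N$ also serves as a dual of $M$ in $\Mod(\A)$, since the inclusion $\Mod_{\cE}(\A) \hookrightarrow \Mod(\A)$ is symmetric monoidal and preserves the unit. Dualizability then allows me to invoke the key computation from \cite[Lem.~3.3.1]{hps} used in the proof of \Cref{lem:picsubgroup}, which yields an equivalence $\L^{\A}_{\B} M \simeq \L^{\A}_{\B}\A \otimes_{\A} M$ in $\Mod(\A)$.

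Next I would identify $\overline{\L}^{\A}_{\B} M \simeq \cell(\L^{\A}_{\B} M)$ using the description of the cellular completion given in the previous lemma, and then apply \Cref{lem:cellular} with the cellular module $M$ to obtain
\[
\overline{\L}^{\A}_{\B} M \;\simeq\; \cell\!\left(\L^{\A}_{\B}\A \otimes_{\A} M\right) \;\simeq\; \cell\!\left(\L^{\A}_{\B}\A\right) \otimes_{\A} M \;\simeq\; \overline{\L}^{\A}_{\B}\A \otimes_{\A} M.
\]
By hypothesis $\A$ is cellularly $\B$-complete, so $\overline{\L}^{\A}_{\B}\A \simeq \A$, and therefore $\overline{\L}^{\A}_{\B} M \simeq M$, giving the desired inclusion of Picard groups.

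I do not anticipate a serious obstacle: the argument is entirely formal once the explicit description of $\overline{\L}^{\A}_{\B}$ from the preceding lemma is combined with \Cref{lem:cellular}. The only mildly delicate point is confirming that an object invertible in $\Mod_{\cE}(\A)$ is genuinely dualizable in $\Mod(\A)$, which is immediate from the symmetric monoidality of the inclusion.
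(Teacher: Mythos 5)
Your argument is correct and is precisely the intended one: the paper states this lemma without proof, presenting it simply as the analog of \Cref{lem:picsubgroup}, and your proof is exactly that analog, with \Cref{lem:cellular} supplying the cellular version of the dualizability/smashing step $\overline{\L}^{\A}_{\B}M \simeq \overline{\L}^{\A}_{\B}\A \otimes_{\A} M$. The only point worth flagging is that the identification $\overline{\L}^{\A}_{\B}M \simeq \cell(\L^{\A}_{\B}M)$ is spelled out in the paper only for $\KQ \to \KGL$, but it holds in the generality you need via the same limit formula, since limits in $\Mod_{\cE}(\A)$ are computed by applying $\cell$ to the corresponding limit in $\Mod(\A)$.
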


\subsection{The Picard group of 2-complete Hermitian $K$-theory}
In contrast to stable homotopy theory, where the homotopy groups of $\KU$ and $\KO$ are well-known, the situation for $\KQ$ and $\KGL$ is much more difficult, since even computing $\pi_{\ast,\ast}\KGL$ is equivalent to computing the homotopy invariant algebraic $K$-theory of the base scheme $S$. Thus, we restrict our attention to studying the 2-completions $\KQ^\wedge_2$ and $\KGL^\wedge_2$ over $\Spec(\C)$. From now on, for notational simplicity, \emph{all spectra are implicitly 2-completed}; that is we write $\KQ$ for $\KQ^\wedge_2$ and $\KGL$ for $\KGL^\wedge_2$. However, our monoidal product is \emph{not} 2-completed.

 Starting with Suslin's computations of the algebraic $K$-theory of fields \cite{suslin}, Isaksen and  Shkembi \cite{isa_shk}  compute that
\[
\pi_{\ast,\ast}\KGL \cong \Z_2[\tau,\beta^{\pm 1}],
\]
where $|\tau| = (0,-1)$ and $|\beta| = (2,1)$
and
\[
\pi_{\ast,\ast}\KGL^{hC_2} \cong \frac{\Z_2[\tau,h_1,a,b^{\pm 1}]}{2h_1,\tau h_1^3,a^2 - 4b,h_1a}
\]
where $|h_1| = (1,1)$, $|a| = (4,2)$ and $|b| = (8,4)$. Moreover, by \cite[Cor.~4.6]{kobal_k} (or \cite{hko_limit}) we have the following.
\begin{prop}\label{prop:kglfixepdoints}
  There is an equivalence $\KGL^{hC_2} \simeq \KQ$.
\end{prop}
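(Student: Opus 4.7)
The plan is to deduce this equivalence from Corollary \ref{cor:monequivalence} by pushing 2-completion past both the homotopy fixed-point construction and the $\eta$-completion. Recall that throughout this subsection $\KQ$ and $\KGL$ denote their 2-completions, so the statement to prove is $(\KGL^{\wedge}_2)^{hC_2} \simeq \KQ^{\wedge}_2$ over $\Spec(\C)$.

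First I would apply Corollary \ref{cor:monequivalence} over $\Spec(\C)$, which satisfies the hypotheses (a finite-dimensional regular separated Noetherian scheme with $2$ invertible), producing an equivalence of \emph{uncompleted} motivic spectra $\KQ^\wedge_\eta \simeq \KGL^{hC_2}$. Applying 2-completion to both sides, the right-hand side yields $(\KGL^\wedge_2)^{hC_2}$: indeed $(-)^{hC_2}$ is a finite limit, and 2-completion, being realized as an inverse limit along the tower $\{-/2^k\}$, commutes with finite limits, giving $(\KGL^{hC_2})^\wedge_2 \simeq (\KGL^\wedge_2)^{hC_2}$.

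The remaining step, which I expect to be the main obstacle, is to verify that $(\KQ^\wedge_\eta)^\wedge_2 \simeq \KQ^\wedge_2$, equivalently that $\KQ^\wedge_2$ is already $\eta$-complete over $\Spec(\C)$. My approach would be to work with the 2-completion of the cofiber sequence \eqref{eq:wood},
\[
\Sigma^{1,1}\KQ^\wedge_2 \xrightarrow{\eta} \KQ^\wedge_2 \to \KGL^\wedge_2,
\]
noting that $\KGL$ is tautologically $\eta$-complete since $\eta$ is null on it (it is the cofiber of $\eta$). Iterating gives cofiber sequences $\Sigma^{k,k}\KQ^\wedge_2 \xrightarrow{\eta^k} \KQ^\wedge_2 \to \KQ^\wedge_2 \otimes C(\eta^k)$ whose inverse limit computes $(\KQ^\wedge_2)^\wedge_\eta$. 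Using the Isaksen--Shkembi description of $\pi_{\ast,\ast}\KQ^\wedge_2$, in which the class $h_1$ corresponding to $\eta$ satisfies $2h_1 = 0$ and $\tau h_1^3 = 0$, one would verify that in each fixed bidegree $\pi_{p,q}\KQ^\wedge_2$ is a finitely generated $\Z_2$-module on which multiplication by $\eta^k$ eventually vanishes, so that the tower has $\lim^1 = 0$ and the natural map $\KQ^\wedge_2 \to (\KQ^\wedge_2)^\wedge_\eta$ is a bidegree-wise isomorphism.

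Stringing the three equivalences $\KQ^\wedge_2 \simeq (\KQ^\wedge_\eta)^\wedge_2 \simeq (\KGL^{hC_2})^\wedge_2 \simeq (\KGL^\wedge_2)^{hC_2}$ together yields the desired conclusion.
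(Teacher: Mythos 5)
The paper gives no internal proof of this statement: it is imported wholesale from the literature (\cite[Cor.~4.6]{kobal_k}, or \cite{hko_limit}), where it is proved unconditionally by quite different means. Your plan to derive it from \Cref{cor:monequivalence} is therefore necessarily a different route (and a conditional one: it inherits the standing hypothesis that $f\colon\KQ\to\KGL$ is a map of commutative motivic ring spectra, which the cited references do not need). The first two steps are essentially fine, modulo one slip: $(-)^{hC_2}$ is \emph{not} a finite limit, since $BC_2$ is infinite-dimensional. The interchange $(\KGL^{hC_2})^{\wedge}_2\simeq(\KGL^{\wedge}_2)^{hC_2}$ still holds, but because $2$-completion is the inverse limit of the exact functors $(-)/2^k$ and hence commutes with \emph{all} limits, not because the fixed points are a finite limit.

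The genuine gap is in the step you flag as the main obstacle. You propose to verify that $\KQ^{\wedge}_2$ is $\eta$-complete over $\Spec(\C)$ by checking that ``multiplication by $\eta^k$ eventually vanishes'' in each bidegree. That is false: in $\pi_{\ast,\ast}\KQ^{\wedge}_2\cong\Z_2[\tau,h_1,a,b^{\pm1}]/(2h_1,\tau h_1^3,a^2-4b,h_1a)$ only $\tau h_1^3$ dies, so $\eta^k\cdot 1=h_1^k\neq 0$ for every $k$ --- this is exactly the non-nilpotence of the motivic Hopf map ($\KQ[\eta^{-1}]$ is the nonzero Witt-theory spectrum even over $\C$). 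What $\eta$-completeness actually requires is the vanishing of $\varprojlim$ and $\varprojlim^1$ of the tower $\cdots\xrightarrow{\eta}\pi_{p-1,q-1}\xrightarrow{\eta}\pi_{p,q}$, i.e.\ the absence of infinitely $\eta$-divisible elements. That does hold, but for a different reason which you would need to supply: since $h_1$ annihilates $2$, $a$, and $\tau h_1^2$, the image of $\cdot\eta^k$ landing in a fixed bidegree $(p,q)$ is spanned by at most the single monomial $h_1^{2q-p}b^{(p-q)/4}$ and vanishes once $k>2q-p$, so the tower is pro-zero. Two further cautions: the ring displayed above is introduced in this paper as $\pi_{\ast,\ast}\KGL^{hC_2}$ and is only identified with $\pi_{\ast,\ast}\KQ^{\wedge}_2$ \emph{via} the very proposition you are proving, so you must invoke a computation of $\pi_{\ast,\ast}\KQ^{\wedge}_2$ (e.g.\ of $\mathrm{kq}^{\wedge}_2$ in \cite{isa_shk}) that is independent of the homotopy limit problem; and you should justify that the $C_2$-fixed points in \Cref{cor:monequivalence} are computed with respect to the same $C_2$-action on $\KGL$ as in the cited statement.
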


In this section we use the methods of the previous section to compute $\pic_{\cE}(\KQ)$. The beginning is the following specialization of \Cref{prop:piccelldescent}.
 \begin{prop}\label{prop:piccelldescentkgl}
 There is an equivalence of connective spectra
  \[
\pics_{\cE}(\KQ)_{\KGL-\mathrm{cmpl}} \simeq \tau_{\ge 0} \pics_{\cE}(\KGL)^{hC_2},
  \]
  with associated homotopy fixed point spectral sequence
\begin{equation}\label{eq:picsskgl}
H^s(C_2,\pi_t\pics_{\cE}(\KGL)) \implies \pi_{t-s}\pic_{\cE}(\KGL)^{hC_2}.
\end{equation}
whose abutment for $t = s$ is the Picard group $\pic_{\cE}(\KQ)_{\KGL-\mathrm{cmpl}}$.
\end{prop}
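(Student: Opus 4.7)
The statement is a direct specialization of \Cref{prop:piccelldescent} to the map $f \colon \KQ^\wedge_2 \to \KGL^\wedge_2$ with $G = C_2$ over $S = \Spec(\C)$, so the plan is simply to verify that the hypotheses of \Cref{prop:piccelldescent} are satisfied in this 2-complete setting, and then read off the spectral sequence.

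First I would check the two structural hypotheses. The base scheme $\Spec(\C)$ is a finite-dimensional regular separated Noetherian scheme with $2$ invertible, so \Cref{lem:fconds} applies to $f \colon \KQ \to \KGL$ and yields both that $f$ is unramified and that $\KGL$ lies in the thick subcategory of $\Mod(\KQ)$ generated by $\KQ$. I would then transfer these facts across 2-completion. The cofiber sequence
\[
\Sigma^{1,1}\KQ \xrightarrow{\eta} \KQ \xrightarrow{f} \KGL \to \Sigma^{2,1}\KQ
\]
of \eqref{eq:wood} remains a cofiber sequence after 2-completion, so $\KGL^\wedge_2$ lies in the thick subcategory of $\Mod(\KQ^\wedge_2)$ generated by $\KQ^\wedge_2$. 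Since $\KGL$ and $\KQ$ are 2-complete, one has $\KGL^\wedge_2 \otimes_{\KQ^\wedge_2} \KGL^\wedge_2 \simeq (\KGL \otimes_{\KQ} \KGL)^\wedge_2$, and 2-completing the unramified equivalence $\KGL \otimes_{\KQ} \KGL \simeq \prod_{C_2} \KGL$ of \Cref{lem:fconds} gives the corresponding equivalence for the 2-completed spectra. The $C_2$-action on $\KGL$ extends to $\KGL^\wedge_2$ as 2-completion is functorial, and the standing assumption that $f$ is a map of commutative motivic ring spectra passes to the 2-completion.

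With the hypotheses verified, I would apply \Cref{prop:piccelldescent} to $f \colon \KQ^\wedge_2 \to \KGL^\wedge_2$ directly, producing the stated equivalence
\[
\pics_{\cE}(\KQ)_{\KGL-\mathrm{cpl}} \simeq \tau_{\ge 0}\pics_{\cE}(\KGL)^{hC_2}
\]
of connective spectra. The homotopy fixed point spectral sequence is then the standard one associated to the $C_2$-action on $\pics_{\cE}(\KGL)$, and the claim that its $t=s=0$ abutment computes $\pic_{\cE}(\KQ)_{\KGL-\mathrm{cmpl}}$ follows from $\pi_0$ of the above equivalence together with the description of $\pi_0 \pics_{\cE}$ in \eqref{eq:pics}.

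There is no real obstacle here; the only mildly delicate point is to make sure the unramified condition and the thick-subcategory membership are genuinely preserved under 2-completion, and both follow formally from exactness of the 2-completion functor on the relevant cofiber sequences and its compatibility with the relative tensor product over $\KQ$ once both factors are already 2-complete.
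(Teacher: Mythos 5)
Your argument is essentially the paper's: the proposition is presented there as a direct specialization of \Cref{prop:piccelldescent}, with the hypotheses (unramifiedness and thick-subcategory membership) supplied by \Cref{lem:fconds}, exactly as you do. Your additional verification that these hypotheses persist under 2-completion is a point the paper leaves implicit, and your reasoning for it is sound.
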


To compute this spectral sequence, we start with a computation of $\pic_{\cE}(\KGL)$. We believe it is possible to use \Cref{lem:picalg} and a variant of \cite[Thm.~38]{baker_richter} to compute this, however it seems simpler to use \Cref{lem:morita}, which says that there is an equivalence of symmetric monoidal $\infty$-categories $\Mod_{\cE}(\KGL) \simeq \Mod(\ku)$.

We recall that $\KGL$ is $(2,1)$-periodic, that is, there is an equivalence $\Sigma^{2,1} \KGL \simeq \KGL$. Then, any suspension $\Sigma^{p,q}\KGL$ can be written in the form $\Sigma^{p-2q,0}\KGL$. This implies that there is at least a copy of the integers in the cellular Picard group of $\KGL$. We claim this is everything.
\begin{lem}\label{cor:algpic}
  The cellular Picard group $\pic_{\cE}(\KGL) \cong \Z$ generated by suspensions.
\end{lem}
\begin{proof}
  By \Cref{lem:morita} it is enough to compute $\pic(\ku)$, but by \cite[Thm.~21]{baker_richter} this Picard group is algebraic, i.e., $\pic(\ku) \cong \pic(\pi_*\ku)$, and is isomorphic to $\Z$.
\end{proof}
\begin{rem}
  We note that we have computed the Picard group with respect to the monoidal product $ - \otimes_{\KGL} -$, and not $(- \otimes_{\KGL} -)^\wedge_2$.
\end{rem}
We are now in a position to compute $\pic_{\cE}(\KQ)$. To do so, we will use the spectral sequence \eqref{eq:picsskgl}. Recall that $\KQ$ is an $(8,4)$-periodic spectrum, and hence (by considering various suspensions of the unit object), the cellular Picard group contains a copy of $\Z \oplus \Z/4$. This turns out to be everything.
\begin{thm}\label{thm:piccalc}
\begin{enumerate}
  \item The Picard group $\pic_{\cE}(\KQ)_{\KGL-\mathrm{cmpl}} \cong \Z \oplus \Z/4$ generated by suspensions of $\KQ$.
\item There is an isomorphism
\[
\pic_{\cE}(\KQ) \cong \pic_{\cE}(\KQ)_{\KGL-\mathrm{cmpl}} \cong \Z \oplus \Z/4.
\]
\end{enumerate}
\end{thm}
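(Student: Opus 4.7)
The plan is to apply \Cref{prop:piccelldescentkgl} and analyse the resulting HFPSS \eqref{eq:picsskgl}, whose $t=s$ abutment is $\pic_{\cE}(\KQ)_{\KGL-\mathrm{cmpl}}$, and then combine with the lower bound $\pic_{\cE}(\KQ) \supseteq \Z \oplus \Z/4$ coming from the suspensions $\Sigma^{p,q}\KQ$ modulo the $(8,4)$-periodicity. This follows the template of Mathew--Stojanoska and Gepner--Lawson, and the Morita equivalence $\Mod_{\cE}(\KGL) \simeq \Mod(\ku^\wedge_2)$ of \Cref{lem:morita} reduces the computation to an essentially classical setting.

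First I would identify the $E_2$-page using \eqref{eq:pics} together with \Cref{cor:algpic} and \Cref{lem:morita}: $\pi_0 \pics_{\cE}(\KGL) \cong \Z$, $\pi_1 \cong \Z_2^{\times}$, and $\pi_t \cong \pi_{t-1}\ku^\wedge_2$ for $t \geq 2$, nonzero only for $t$ odd, where $\pi_t \cong \Z_2 \cdot \beta^{(t-1)/2}$. The $C_2$-action is trivial on $\pi_0$ and $\pi_1$, and acts by $(-1)^k$ on $\pi_{2k}\ku^\wedge_2$ since complex conjugation sends $\beta$ to $-\beta$. Computing group cohomology of $C_2$ yields the diagonal entries $E_2^{0,0} = \Z$, $E_2^{1,1} = H^1(C_2;\Z_2^\times) = \Z/2$, $E_2^{s,s} = 0$ for even $s \geq 2$, and for odd $s \geq 3$ the sign representation contributes $\Z/2$ precisely when $(s-1)/2$ is odd, i.e.\ $s \equiv 3 \pmod 4$, and zero otherwise.

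Next I would pin down the differentials. By the comparison results of \cite{mathew_stojanoska}, the Picard HFPSS shares differentials (up to a degree shift) with the HFPSS for the descent spectrum $\KGL^{hC_2} \simeq \KQ$ in sufficiently high filtration. Using the known differential $d_3(\beta) = \tau h_1^3$ in the HFPSS for $\KQ$ over $\Spec(\C)$ from \cite{isa_shk}, and its multiplicative consequences on powers of $\beta$, one checks that each entry $E_2^{s,s} = \Z/2$ for $s = 7, 11, 15, \ldots$ is killed by an incoming differential. This leaves only $E_\infty^{0,0} = \Z$, $E_\infty^{1,1} = \Z/2$, and $E_\infty^{3,3} = \Z/2$ on the diagonal, so $\pic_{\cE}(\KQ)_{\KGL-\mathrm{cmpl}}$ is at most $\Z$ times a group of order $4$. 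The lower bound $\Z \oplus \Z/4$, coming from the order-$4$ class $\Sigma^{2,1}\KQ$, forces the extension of $E_\infty^{1,1}$ by $E_\infty^{3,3}$ to be the nontrivial $\Z/4$ rather than $(\Z/2)^2$, proving part (1).

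For part (2), the 2-complete identification $\KQ \simeq \KGL^{hC_2}$ of \Cref{prop:kglfixepdoints} realises $\KQ$ as a limit of $\KGL$-modules, so $\KQ$ is cellularly $\KGL$-complete and \Cref{lem:piccellsubgroup} gives $\pic_{\cE}(\KQ) \subseteq \pic_{\cE}(\KQ)_{\KGL-\mathrm{cmpl}} \cong \Z \oplus \Z/4$; the opposite inclusion again comes from the suspensions $\Sigma^{p,q}\KQ$. The main obstacle is step two: propagating the $d_3$-differentials through the Picard HFPSS to extinguish the infinite family of classes at $(s,s)$ with $s \equiv 3 \pmod 4$ and $s \geq 7$. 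This is the motivic analogue of the classical computation $\pic(\KO) \cong \Z/8$, and I would follow the organising principles of Gepner--Lawson, where the same pattern of surviving $\Z$'s and $\Z/2$'s with a hidden extension arises.
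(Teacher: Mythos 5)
Your proposal follows essentially the same route as the paper: descent via \Cref{prop:piccelldescentkgl}, identification of the $E_2$-page through the Morita equivalence with $\ku^{\wedge}_2$ from \Cref{lem:morita}, importation of the $d_3$-differentials from the endomorphism (descent) spectral sequence in the range allowed by the comparison results of \cite{mathew_stojanoska}, and the sandwich between the lower bound $\Z \oplus \Z/4$ coming from $(8,4)$-periodicity and the upper bound read off the diagonal of the spectral sequence. Two small points where the paper is more careful: the $C_2$-action on $\pi_*\ku$ is not simply asserted to be complex conjugation (the equivalence of \Cref{lem:morita} was not constructed equivariantly), but is instead deduced from the known abutment $\pi_*(\ku^{hC_2})$, whose nontrivial $1$-stem rules out the trivial action; and the differential you quote should originate on $\beta^2$ (the class $a$) rather than on $\beta$, since $H^0(C_2;\pi_2\ku)$ vanishes under the sign action, so $\beta$ itself does not appear on the zero-line of the $E_2$-page.
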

\begin{proof}
We claim that since $\KQ \simeq \KGL^{hC_2}$, and $\KGL$ is a cellularly $\KGL$-complete $\KQ$-module, $\KQ$ itself is a cellularly $\KGL$-complete $\KQ$-module. Indeed, it follows we have a series of equivalences
\[
\overline{\L}^{\KQ}_{\KGL}(\KQ) \simeq \cell \KQ^{\wedge}_{\eta} \simeq \cell (\KGL^{hC_2})^{\wedge}_{\eta} \simeq \cell \KGL^{hC_2} \simeq \cell \KQ \simeq \KQ.
\]

 It follows by \Cref{lem:piccellsubgroup} that $\Z \oplus \Z/4 \subseteq \pic_{\cE}(\KQ) \subseteq \pic_{\cE}(\KQ)_{\KGL-\mathrm{cmpl}}$, and we see that it suffices to prove the first statement of the theorem. We will use the descent spectral sequence for $\pics_{\cE}(\KQ)_{\KGL-\mathrm{cmpl}}$ to show that $\Z \oplus \Z/4$ is also an upper bound.

Thus, using \Cref{prop:piccelldescentkgl}, we want to compute the 0-stem of the Picard spectral sequence
\begin{equation}
E_2^{s,t} = H^s(C_2,\pi_t\pics_{\cE}(\KGL)) \implies \pi_{t-s}\pic_{\cE}(\KGL)^{hC_2}.
\end{equation}
  By \Cref{lem:morita}, \Cref{cor:algpic}, and \eqref{eq:pics} we have
\[
\pi_t\pics_{\cE}(\KGL) \cong
\begin{cases}
  \Z & t = 0 \\
  \Z_2^\times & t = 1 \\
  \pi_{t-1}\ku & t \ge 2.
\end{cases}
  \]
  Moreover, the equivalence  between $\Hom_{\Mod(\KGL)}(\KGL,\KGL) \simeq \Hom_{\SH(S)}(S,\KGL)$ and $\ku$ is $C_2$-equivariant for the usual action on $\ku$, see the proof of Theorem 3.18 of \cite{bachmann_hopkins}. Thus, we can completely determine the $E_2$-page of the spectral sequence in the range $t \ge 2$; we have additively
  \[
E_2^{s,t} = H^s(C_2;\pi_t \pics_{\cE}(\KGL)) \cong H^s(C_2;\pi_{t-1}\ku) \cong \begin{cases}
  \Z_2 & s = 0, t \equiv 1 \pmod 4 \\
  \Z/2 & s> 0, t \equiv 1 \pmod 4 \\
  0 & \text{else.}
\end{cases}
  \]

This almost completely determines the $t \ge 2$ range of the Picard spectral sequence. Arguing as in \cite[Cor.~5.2.3]{mathew_stojanoska} (and using an analog of the Comparison Tool 5.2.4 of \emph{loc.~cit.}) the differentials originating at $E_3^{s,t}$ for $t \ge 4$ in the Picard spectral sequence can be identified with those originating at $\overline{E}_3^{s,t-1}$ in the usual spectral sequence
\[
\overline{E}_2^{s,t} = H^s(C_2;\pi_t\ku) \implies \pi_{t-s}(\ko).
\]

We are now in the situation shown in \Cref{fig:hfpss}, which is drawn using Adams indexing. Note that we cannot import the differential originating at $(t-s,s) = (0,3)$ since this lies on the $t = 3$ line. One could prove and use an analog of \cite[Thm.~6.1.1]{mathew_stojanoska} to calculate this differential (which would imply that it is trivial), however we observe that we already have an upper bound of $\Z \oplus \Z/4$, which is also our lower bound. Moreover, the copy of $\Z$ in $E_2^{0,0}$, which corresponds to the various suspensions $\Sigma^{k,0}\KGL$ is represented by $\Sigma^{k,0}\KQ$ in $\pic_{\cE}(\KQ)_{\KGL-\mathrm{cmpl}}$. This also generates the copy of $\Z$ in the lower bound of $\Z \oplus \Z/4$ and so we conclude that $\pic_{\cE}(\KQ)_{\KGL-\mathrm{cmpl}} \cong \Z \oplus \Z/4$, as claimed.
\end{proof}
\begin{figure}[!tbhp]
\centering
\includegraphics{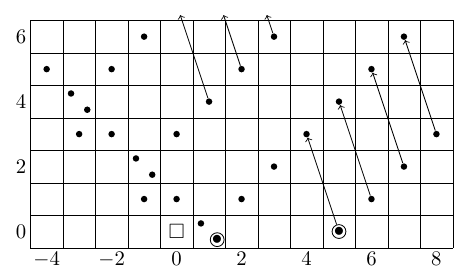}
\caption{The HFPSS for $\pi_*\pics_{\cE}(\KGL)^{hC_2}$. Here $\square$ denotes copies of $\Z$, $\bullet$ copies of $\Z/2$, and $\circledbullet$ copies of $\Z_2$. We have omitted some terms in negative filtration degree. }\label{fig:hfpss}\vspace{-4mm}
\end{figure}
\bibliography{motpic}
\bibliographystyle{amsalpha}

\end{document}